\newtheorem{theorem}{Theorem}
\newtheorem{conjecture}{Conjecture}
\newtheorem{lemma}{Lemma}
\newtheorem{proposition}{Proposition}
\newtheorem{definition}{Definition}
\newtheorem{corollary}{Corollary}
\newtheorem{claim}{Claim}
 \theoremstyle{definition}
 \theoremstyle{remark}
 \numberwithin{equation}{section}
\newcommand{\vertiii}[1]{{\left\vert\kern-0.25ex\left\vert\kern-0.25ex\left\vert #1
    \right\vert\kern-0.25ex\right\vert\kern-0.25ex\right\vert}}
\newcommand{\f}[2]{\frac{#1}{#2}}
\newcommand{\cl}{{\mathcal L}}
\begin{document}

%DELIMETERS
\newcommand{\abs}[1]{\lvert#1\rvert}
%\newcommand{\norm}[2]{{\left\| #1 \right\|}_{#2}}

%GREEK LETTERS
\newcommand{\al}{\alpha}
\newcommand{\be}{\beta}
\newcommand{\wh}[1]{\widehat{#1}}
\newcommand{\ga}{\gamma}
\newcommand{\Ga}{\Gamma}
\newcommand{\de}{\delta}
\newcommand{\ben}{\beta_n}
\newcommand{\De}{\Delta}
\newcommand{\ve}{\varepsilon}
\newcommand{\ze}{\zeta}
\newcommand{\Th}{\Theta}
\newcommand{\ka}{\kappa}
\newcommand{\la}{\lambda}
\newcommand{\laj}{\lambda_j}
\newcommand{\lak}{\lambda_k}
\newcommand{\La}{\Lambda}
\newcommand{\si}{\sigma}
\newcommand{\Si}{\Sigma}
\newcommand{\vp}{\varphi}
\newcommand{\om}{\omega}
\newcommand{\Om}{\Omega}

%EUCLIDEAN SPACES
\newcommand{\ro}{{\mathbf R}}
\newcommand{\rn}{{\mathbb R}^n}
\newcommand{\rd}{{\mathbf R}^d}
\newcommand{\rmm}{{\mathbf R}^m}
\newcommand{\rone}{\mathbb R}
\newcommand{\rtwo}{\mathbf R^2}
\newcommand{\rthree}{\mathbf R^3}
\newcommand{\rfour}{\mathbf R^4}
\newcommand{\ronen}{{\mathbf R}^{n+1}}
\newcommand{\ku}{\mathbf u}
\newcommand{\kw}{\mathbf w}
\newcommand{\kf}{\mathbf f}
\newcommand{\kz}{\mathbf z}

\newcommand{\N}{\mathbf N}

\newcommand{\tn}{\mathbf T^n}
\newcommand{\tone}{\mathbf T^1}
\newcommand{\ttwo}{\mathbf T^2}
\newcommand{\tthree}{\mathbf T^3}
\newcommand{\tfour}{\mathbf T^4}

\newcommand{\zn}{\mathbf Z^n}
\newcommand{\zp}{\mathbf Z^+}
\newcommand{\zone}{\mathbf Z^1}
\newcommand{\zz}{\mathbf Z}
\newcommand{\ztwo}{\mathbf Z^2}
\newcommand{\zthree}{\mathbf Z^3}
\newcommand{\zfour}{\mathbf Z^4}

\newcommand{\hn}{\mathbf H^n}
\newcommand{\hone}{\mathbf H^1}
\newcommand{\htwo}{\mathbf H^2}
\newcommand{\hthree}{\mathbf H^3}
\newcommand{\hfour}{\mathbf H^4}

\newcommand{\cone}{\mathbf C^1}
\newcommand{\ctwo}{\mathbf C^2}
\newcommand{\cthree}{\mathbf C^3}
\newcommand{\cfour}{\mathbf C^4}
\newcommand{\dpr}[2]{\langle #1,#2 \rangle}

\newcommand{\sn}{\mathbf S^{n-1}}
\newcommand{\sone}{\mathbf S^1}
\newcommand{\stwo}{\mathbf S^2}
\newcommand{\sthree}{\mathbf S^3}
\newcommand{\sfour}{\mathbf S^4}

%SPACES
\newcommand{\lp}{L^{p}}
\newcommand{\lppr}{L^{p'}}
\newcommand{\lqq}{L^{q}}
\newcommand{\lr}{L^{r}}
\newcommand{\echi}{(1-\chi(x/M))}
\newcommand{\chip}{\chi'(x/M)}

\newcommand{\wlp}{L^{p,\infty}}
\newcommand{\wlq}{L^{q,\infty}}
\newcommand{\wlr}{L^{r,\infty}}
\newcommand{\wlo}{L^{1,\infty}}
\newcommand{\eps}{\epsilon}

\newcommand{\lprn}{L^{p}(\rn)}
\newcommand{\lptn}{L^{p}(\tn)}
\newcommand{\lpzn}{L^{p}(\zn)}
\newcommand{\lpcn}{L^{p}(\cn)}
\newcommand{\lphn}{L^{p}(\cn)}

\newcommand{\lprone}{L^{p}(\rone)}
\newcommand{\lptone}{L^{p}(\tone)}
\newcommand{\lpzone}{L^{p}(\zone)}
\newcommand{\lpcone}{L^{p}(\cone)}
\newcommand{\lphone}{L^{p}(\hone)}

\newcommand{\lqrn}{L^{q}(\rn)}
\newcommand{\lqtn}{L^{q}(\tn)}
\newcommand{\lqzn}{L^{q}(\zn)}
\newcommand{\lqcn}{L^{q}(\cn)}
\newcommand{\lqhn}{L^{q}(\hn)}

\newcommand{\lo}{L^{1}}
\newcommand{\lt}{L^{2}}
\newcommand{\li}{L^{\infty}}

\newcommand{\co}{C^{1}}
\newcommand{\ci}{C^{\infty}}
\newcommand{\coi}{C_0^{\infty}}

%CALLIGRAPHIC
\newcommand{\ca}{\mathcal A}
\newcommand{\cs}{\mathcal S}
\newcommand{\cg}{\mathcal G}
\newcommand{\cm}{\mathcal M}
\newcommand{\cf}{\mathcal F}
\newcommand{\cb}{\mathcal B}
\newcommand{\ce}{\mathcal E}
\newcommand{\cd}{\mathcal D}
\newcommand{\cn}{\mathcal N}
\newcommand{\cz}{\mathcal Z}
\newcommand{\crr}{\mathcal R}
\newcommand{\cc}{\mathcal C}
\newcommand{\ch}{\mathcal H}
\newcommand{\cq}{\mathcal Q}
\newcommand{\cp}{\mathcal P}
\newcommand{\cx}{\mathcal X}

%GENERAL
\newcommand{\pv}{\textup{p.v.}\,}
\newcommand{\loc}{\textup{loc}}
\newcommand{\intl}{\int\limits}
\newcommand{\iintl}{\iint\limits}
\newcommand{\dint}{\displaystyle\int}
\newcommand{\diint}{\displaystyle\iint}
\newcommand{\dintl}{\displaystyle\intl}
\newcommand{\diintl}{\displaystyle\iintl}
\newcommand{\liml}{\lim\limits}
\newcommand{\suml}{\sum\limits}
\newcommand{\ltwo}{L^{2}}
\newcommand{\supl}{\sup\limits}
\newcommand{\df}{\displaystyle\frac}
\newcommand{\p}{\partial}
\newcommand{\Ar}{\textup{Arg}}
\newcommand{\abssigk}{\widehat{|\si_k|}}
\newcommand{\ed}{(1-\p_x^2)^{-1}}
\newcommand{\tT}{\tilde{T}}
\newcommand{\tV}{\tilde{V}}
\newcommand{\wt}{\widetilde}
\newcommand{\Qvi}{Q_{\nu,i}}
\newcommand{\sjv}{a_{j,\nu}}
\newcommand{\sj}{a_j}
\newcommand{\pvs}{P_\nu^s}
\newcommand{\pva}{P_1^s}
\newcommand{\cjk}{c_{j,k}^{m,s}}
\newcommand{\Bjsnu}{B_{j-s,\nu}}
\newcommand{\Bjs}{B_{j-s}}
\newcommand{\Ly}{L_i^y}
\newcommand{\dd}[1]{\f{\partial}{\partial #1}}
\newcommand{\czz}{Calder\'on-Zygmund}
\newcommand{\chh}{\mathcal H}

%BEGIN END
\newcommand{\lbl}{\label}
\newcommand{\beq}{\begin{equation}}
\newcommand{\eeq}{\end{equation}}
\newcommand{\beqna}{\begin{eqnarray*}}
\newcommand{\eeqna}{\end{eqnarray*}}
\newcommand{\beqn}{\begin{equation*}}
\newcommand{\eeqn}{\end{equation*}}
\newcommand{\bp}{\begin{proof}}
\newcommand{\ep}{\end{proof}}
\newcommand{\bprop}{\begin{proposition}}
\newcommand{\eprop}{\end{proposition}}
\newcommand{\bt}{\begin{theorem}}
\newcommand{\et}{\end{theorem}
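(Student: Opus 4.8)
The plan is to establish the asserted $\lp$ bound by the two-pronged \czz\ strategy, adapted to the dyadic decomposition encoded in the pieces $\pvs$ and the blocks $\Bjsnu$. First I would secure the $\lt$ anchor: writing the operator as a superposition over scales $j$ and shifts $s$ of the localized pieces built from $\si_k$, Plancherel's theorem reduces the $\lt\to\lt$ estimate to a uniform $\li$ bound on the associated symbol. Here the majorant $\abssigk$ plays the decisive role — each piece is dominated in modulus by a positive operator with this symbol, so that the size and smoothness hypotheses on $\si$ translate directly into an $\lt$ bound that is uniform in the parameters $j,s,\nu$.

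The second prong is the weak-type endpoint $\wlo$. I would run a \czz\ decomposition of an $\lo$ function at height $\la$, split into good and bad parts, and control the good part using the $\lt$ estimate from the first step. For the bad part the essential input is a Hörmander-type kernel estimate, namely that $\int_{\abs{x-y}>2\abs{y-y_0}}\abs{K(x,y)-K(x,y_0)}\,dx\lesssim 1$ uniformly in $y_0$. This is obtained by summing the contributions of the blocks $\Bjsnu$ against the cancellation built into each $\pvs$; the coefficients $\cjk$ supply decay in $\abs{j-k}$, and it is precisely this decay that renders the block sum convergent.

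With the $\lt$ bound and the $\wlo$ bound in hand, Marcinkiewicz interpolation yields $\lp$ boundedness for $1<p\le 2$. The symmetric structure of the decomposition lets the adjoint be treated identically, so a duality argument extends the range to $2\le p<\infty$, completing the proof.

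The main obstacle I anticipate is the convergence of the block sum in the Hörmander estimate: one must extract genuine decay in $\abs{j-k}$ from the interaction between the frequency localization of $\pvs$ and the spatial support of $\Bjsnu$, rather than merely bounding absolute values term by term. Controlling this almost-orthogonality — most cleanly via a Cotlar–Stein argument on the $\lt$ side, paired with an integration-by-parts (or stationary-phase) estimate on the kernel side — is where the real work lies. Once that decay in the shift parameter is secured, the interpolation and duality steps are routine.
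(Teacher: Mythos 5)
There is a fundamental mismatch here: the ``statement'' you were asked to prove is not a theorem of this paper at all --- it is a fragment of the preamble (the definition of the \texttt{theorem} environment and a shorthand macro), and the objects your argument is built around ($\pvs$, $\Bjsnu$, $\cjk$, $\abssigk$) are unused macro names that never appear in the body of the paper. Consequently your proposal proves nothing that the paper asserts. The paper is about ground states of a fractional NLS with a trapping potential $V$: its main results are (i) existence of constrained minimizers of the energy at fixed $L^2$ norm, obtained via rearrangement inequalities and a compactness argument exploiting $V(r)\to\infty$; (ii) non-degeneracy of the linearized operator $\cl_+$, obtained from a Sturm oscillation theorem for the second radial eigenfunction of $(-\De)^s+W$ together with a spherical-harmonics decomposition; and (iii) orbital stability, obtained from conservation of $E$ and $P$ plus coercivity of $\cl_\pm$ on the orthogonal complement of $\phi$. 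None of these is approached by a \czz\ decomposition, a H\"ormander kernel condition, weak-$(1,1)$ bounds, Marcinkiewicz interpolation, or Cotlar--Stein almost orthogonality.

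Concretely, the gap is not a missing step inside your argument but the absence of any target statement for it: there is no $\lp$-boundedness claim in the paper for an operator assembled from dyadic pieces, so the $\lt$ anchor, the $\wlo$ endpoint, and the interpolation/duality steps have nothing to attach to. If the intended exercise was one of the paper's actual results, you would need to restart from the relevant variational or spectral framework --- e.g., for the existence theorem, the key inputs are the fractional Polya--Szeg\"o inequality \eqref{PS}, the monotone-potential rearrangement bound \eqref{r:20}, and the pointwise decay of bell-shaped minimizing sequences that yields compactness in $L^{p+1}\cap L^2$ --- rather than any singular-integral machinery.
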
}
\newcommand{\bex}{\begin{Example}}
\newcommand{\eex}{\end{Example}}
\newcommand{\bc}{\begin{corollary}}
\newcommand{\ec}{\end{corollary}}
\newcommand{\bcl}{\begin{claim}}
\newcommand{\ecl}{\end{claim}}
\newcommand{\bl}{\begin{lemma}}
\newcommand{\el}{\end{lemma}}
\newcommand{\dea}{(-\De)^\be}
\newcommand{\naa}{|\nabla|^\be}
\newcommand{\cj}{{\mathcal J}}

\title[Solitons for the   NLS under  trapping  potential ]
 {Ground states for the  nonlinear Schr\"odinger equation under a general trapping  potential}
%----------Author 1

\thanks{ 
 Stanislavova is partially supported by  NSF-DMS under grant \# 1516245.   Stefanov    is partially  supported by  NSF-DMS under grant \# 1908626.}

\author[Milena  Stanislavova]{\sc Milena Stanislavova}
\address{ Department of Mathematics,
University of Kansas,
1460 Jayhawk Boulevard,  Lawrence KS 66045--7523, USA}
\email{stanis@ku.edu}

%----------Author 2
\author[Atanas Stefanov]{\sc Atanas G. Stefanov}
\address{ Department of Mathematics,
University of Kansas,
1460 Jayhawk Boulevard,  Lawrence KS 66045--7523, USA}
\email{stefanov@ku.edu}

\subjclass[2010]{Primary 35Q55, 35B35, 35C08; Secondary 35A15, 35Q40  }

\keywords{ Schr\"odinger equation with trapping potential, ground states, stability}

\date{\today}

\begin{abstract}
The classical Schr\"odinger equation with a harmonic trap potential $V(x)=|x|^2$, describing the quantum harmonic oscillator, has been studied quite extensively in the last twenty  years. Its ground states are  bell-shaped and unique, among localized positive solutions. In addition, they have been shown  to be non-degenerate and (strongly) orbitally  stable. 
All of these results, produced over the course of many publications  and   multiple authors,  rely on  ODE methods specifically designed  for the Laplacian and the power function potential. 

In this article, we provide a wide generalization of these results. More specifically, we assume sub-Laplacian fractional dispersion and a very  general form of the trapping potential $V$, with  the driving linear operator in  the form $\ch=(-\De)^s+V, 0<s\leq 1$.  We show that the normalized waves of such semilinear fractional Schr\"odinger equation exist, they are bell-shaped, provided that the   non-linearity is of the form $|u|^{p-1} u, p<1+\f{4 s}{n}$. In addition, we show that such waves are non-degenerate, and strongly orbitally stable.  Most of these results are new even in the classical case $\ch=-\De+V$, where $V$ is a general trapping potential considered herein.

\end{abstract}
 
\maketitle

\section{Introduction} 
The  Schr\"odinger equation is an ubiquitous model in quantum mechanical applications.  In this work, we consider a model, in which the system is subjected to so-called magnetic traps, which keeps the action  very tightly to the trap. Mathematically, the probability density functions that arise as squares of the solutions have unusually high space localization, compared to the standard model without trapping. 
Next, we formally  introduce the model. 
\subsection{The model} 
We consider the fractional Schr\"odinger equation subject to a trapping harmonic potential
\begin{equation}
\label{a:10}
i u_t +(-\De)^s u+V(x) u-|u|^{p-1} u=0, (t,x)\in \rone_+\times \rn
\end{equation}
where $n\geq 1$, $p>1$ and we assume that the potential is trapping. That is 
\begin{definition}
	\label{de:trap} 
	We say that a potential $V:\rone_+\to \rone_+$ is trapping, if\footnote{ The requirement for at most polynomial growth of $V$ is likely just  a technicality, but we prefer to enforce it, due to the difficulties with the space of test functions, should $V$ has faster growth.}
	\begin{itemize}
		\item $V(x)=V(|x|)$, 
		\item $V$ is increasing and in fact, assume $V\in C^1(\rone_+)$, $V'(r)>0$. 
		\item $\lim_{r\to \infty} V(r)=+\infty$, but it has at most polynomial growth. That is, for some $N>1$, $V(r)\leq C(1+r)^N$. 
	\end{itemize} 
\end{definition}

 The natural energy space associated with this problem is  the space 
$$
X_s:=\dot{H}^{s}(\rn)\cap L^2(V(x) dx)=\{u:\rn\to \rone: \|(-\De)^{s/2} u\|_{L^2}^2+ \int_{\rn} V(x) |u(x)|^2  dx<\infty  \}
$$

  In typical quantum mechanical applications, $u$ is the probability density function of a particle trapped inside a trapping potential well, traditionally modeled by  $V(x)$. Note that the linear operator driving this particular evolution is 
$
\ch:=(-\De)^s+V. 
$

Quite a bit is known about $\ch$, we will just mention a few relevant properties. To that end, $\ch$ is a self-adjoint operator, when considered on the domain  
$$
D(\ch)=\dot{H}^{2s}(\rn)\cap L^2(V^2(x) dx)=\{u:\rn\to \rone: \|(-\De)^{s} u\|_{L^2}^2+ \int_{\rn} V^2(x) |u(x)|^2  dx<\infty  \}
$$
In addition, we will show in a rather standard manner, that its spectrum, which is of course all real, consists entirely of discrete eigenvalues of finite multiplicity, which converge to $+\infty$.  
Recall the conservation laws for \eqref{a:10}, the Hamiltonian energy 
$$
E[u]:=\f{1}{2}\left( \int_{\rn} |\nabla|^s u(t,x)|^2+   \int_{\rn} V(x) |u(t,x)|^2  dx\right) - \f{1}{p+1} \int_{\rn}  |u(t,x)|^{p+1} dx,
$$
and the $L^2$ norm ( or particle number or power) 
$$
P[u]=\int_{\rn}  |u(t,x)|^2 dx
$$
Standing waves of this equation are  solutions of \eqref{a:10} in  the form $u(t,x)=e^{- i \om t} \phi_\om(x)$.  Clearly, they   satisfy the elliptic equation 
\begin{equation}
\label{a:20}
(-\De)^s \phi +V  \phi +\om \phi -|\phi|^{p-1} \phi=0,  x\in  \rn
\end{equation}
for some $\om$.  We shall be particularly interested in positive solutions of \eqref{a:20}. In addition, we shall be interested in their dynamical stability properties.  

In the classical case of harmonic Schr\"odinger equation, that is $s=1$, $V(x)=|x|^2$, the problem is well-studied. This is of course the standard model\footnote{in non-dimensionalized variables} of the quantum harmonic oscillator. 
Most of the finding of this paper confirm these and present a natural extension to the more general case of potentials introduced in Definition \ref{de:trap} and the sub-Laplacian dispersion.  Thus, we take the opportunity to review the relevant recent results, which will also help us outline the areas of interest in this study. 
\subsection{The quantum harmonic oscillator} 
The linear quantum oscillator operator is given by $-\De+|x|^2$. It  has been studied in great detail over the last thirty years. In particular, it has been established that it is self-adjoint, with spectrum entirely consisting of eigenvalues of finite multiplicity. In fact, the eigenvalues  are explicitly known and even the corresponding eigenvectors can be written in terms of  the classical Hermite polynomials - for example, the lowest eigenvalue $\si_0(-\De+|x|^2)=n$, with corresponding eigenfunction $e^{-\f{|x|^2}{2}}$. 

Regarding the issues of interest in this work, for the  corresponding Schr\"odinger problem 
\begin{equation}
\label{10}
i u_t -\De u+|x|^2 u-|u|^{p-1} u=0, (t,x)\in \rone_+\times \rn,
\end{equation}
standing wave solutions, namely solutions, as above $u=e^{- i \om t} \phi$ can be constructed. More precisely, one is (initially)  looking for distributional   solutions,  that is $\phi \in X_1=H^1(\rn)\cap L^2(|x|^2 dx)$,   so that 
\begin{equation}
\label{20}
-\De \phi +|x|^2 \phi +\om \phi -|\phi|^{p-1} \phi=0,  x\in  \rn,
\end{equation}
in a distributional sense. 
For example, it is well-known that for any $\om \in (-n, \infty)$ and $1<p< p^*_n:=\left\{\begin{array}{cc} 
+\infty & n=1,2 \\
1+ \f{4}{n-2} & n\geq 3
\end{array}
\right.$
there exists  solutions of \eqref{20}, which belong to the energy space $X_1$, see \cite{Ca, HO, KT}.  Here  the significance of the restriction $\om>-n$ is in that $\ch+\om\geq (\om+n) Id>0$. 
In addition, very strong uniqueness theorems for \eqref{20} are known, if we restrict our attention to ground states - that is, positive solutions of \eqref{20}. 
 Let us state the uniqueness and non-degeneracy results, already  available in the literature. 
 \begin{proposition}
 	\label{prop:lk}
 	Let $n\geq 2$ and $1<p<p_n^*$. For every $\om>-n$, there is an unique positive solution $\phi_\om: \lim_{|x|\to \infty}\phi_\om(x)=0$,  of 
 	$$
 	-\De \phi +|x|^2 \phi +\om \phi - \phi^p=0,  x\in  \rn. 
 	$$
 	 Moreover, such solution is non-degenerate, that is the linearized operator $\cl_+:=-\De+|x|^2 + \om - p \phi^{p-1}$ has a trivial kernel, $Ker[\cl_+]=\{0\}$. 
 \end{proposition}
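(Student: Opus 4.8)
The plan is to separate the statement into three tasks --- variational existence, the easy angular part of non-degeneracy, and the radial part (intertwined with uniqueness) --- treating the last as the crux.

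\textbf{Existence.} First I would produce a positive radial solution by direct minimization. Because $V(x)=|x|^2\to\infty$, the embedding $X_1\hookrightarrow L^{q}(\rn)$ is compact for every $q\in[2,p_n^*+1)$ (note $p_n^*+1=\f{2n}{n-2}=2^*$), and this compactness substitutes for the concentration-compactness machinery that the translation-invariant free problem requires. I would minimize $J_\om[\phi]=\f12\big(\|\nabla\phi\|_2^2+\int|x|^2|\phi|^2+\om\|\phi\|_2^2\big)-\f1{p+1}\int|\phi|^{p+1}$ over the Nehari manifold. The hypothesis $\om>-n$ is exactly the statement $\ch+\om>0$; since the bottom eigenvalue of $-\De+|x|^2$ is $n$, one gets $\dpr{(\ch+\om)\phi}{\phi}\geq\f{n+\om}{n}\|\phi\|_{X_1}^2$, which gives coercivity, and the compact embedding delivers a minimizer. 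Replacing $\phi$ by $|\phi|$ and applying the strong maximum principle yields $\phi>0$, while Schwarz symmetrization forces $\phi=\phi(|x|)$ to be radial and radially decreasing.

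\textbf{Non-degeneracy in the angular sectors $\ell\geq1$.} Next I would decompose $L^2(\rn)$ into spherical harmonics, so that $\cl_+$ restricts to the radial operators $\cl_+^{(\ell)}=-\p_r^2-\f{n-1}{r}\p_r+\f{\ell(\ell+n-2)}{r^2}+r^2+\om-p\phi^{p-1}$. Differentiating the profile equation in $x_j$ gives $\cl_+(\p_j\phi)=-2x_j\phi$, i.e. in the $\ell=1$ channel $\cl_+^{(1)}\phi_r=-2r\phi$. Since $\phi$ is radially decreasing, $u:=-\phi_r>0$ and $\cl_+^{(1)}u=2r\phi>0$; testing against the positive ground state $\psi_0$ of $\cl_+^{(1)}$ gives $\mu_0\dpr{u}{\psi_0}=\dpr{2r\phi}{\psi_0}>0$, so the lowest eigenvalue $\mu_0$ of $\cl_+^{(1)}$ is strictly positive. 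As $\cl_+^{(\ell)}\geq\cl_+^{(1)}$ for all $\ell\geq1$ (the centrifugal coefficient $\ell(\ell+n-2)$ is increasing in $\ell$), every non-radial channel is positive definite and contributes nothing to $Ker[\cl_+]$.

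\textbf{Radial non-degeneracy and uniqueness.} The remaining and hardest point is to exclude a radial kernel, which is inseparable from uniqueness. From the profile equation one computes $\cl_+\phi=-(p-1)\phi^p<0$, so the bottom of the spectrum of $\cl_+^{(0)}$ is negative with a positive radial ground state; any radial kernel element must therefore be sign-changing and sit strictly above the ground state. I would rule out $0$ as an eigenvalue through a one-dimensional Sturm--Liouville/shooting analysis of the radial ODE: the monotone dependence of the positive decaying solution on the shooting datum (equivalently on $\om$), combined with Sturm comparison, pins the nodal count and forces the second radial eigenvalue of $\cl_+^{(0)}$ to be strictly positive, while the same monotonicity argument simultaneously yields uniqueness of the positive decaying profile. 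I expect this radial step to be the genuine obstacle: unlike the $\ell\geq1$ channels it cannot be read off from the symmetry identity and really requires the Kwong/Kabeya--Tanaka-type ODE arguments of \cite{HO,KT}. The subcritical range $p<p_n^*$ enters precisely here, guaranteeing the decay and integrability that the Sturm-theoretic bookkeeping needs.
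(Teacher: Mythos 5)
The paper offers no proof of Proposition \ref{prop:lk}: it is quoted as a known result, with uniqueness attributed to \cite{HO, HO1, KT} and non-degeneracy to \cite{KT, BO}. So the comparison is really between your outline and those references, and your architecture does match theirs. The existence step (coercivity of $J_\om$ from $\ch+\om\geq \f{n+\om}{n}\ch$ for $-n<\om<0$, compactness of $X_1\hookrightarrow L^q$, $q<2^*$, Nehari minimization, positivity and Schwarz symmetrization) is complete and correct. The $\ell\geq 1$ step is also correct: differentiating the profile equation gives $\cl_+(\p_j\phi)=-2x_j\phi$, and pairing $u=-\phi_r>0$ against the positive ground state of $\cl_+^{(1)}$ forces its lowest eigenvalue to be positive, with monotonicity of $\ell(\ell+n-2)$ handling $\ell\geq 2$. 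This is essentially the same symmetry identity the authors use in their Proposition \ref{prop:63} (there with $V'(r)>0$ in place of $2r$), just packaged as a direct positivity statement rather than a contradiction.

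The genuine gap is that the radial step, which is the entire nontrivial content of the proposition, is not executed: both the uniqueness of the positive decaying profile and the exclusion of a zero eigenvalue of $\cl_+^{(0)}$ above the negative ground state are delegated to "Kwong/Kabeya--Tanaka-type" shooting and Sturm comparison without carrying out the monotonicity-of-the-shooting-parameter analysis that makes those arguments work (and which occupies most of \cite{HO, KT}). As written, the proposal is therefore a correct roadmap but not a proof. It is worth noting that the machinery the authors develop later (the constrained second variation giving $n(\cl_+)=1$ and $\cl_+|_{\{\phi\}^\perp}\geq 0$ in Proposition \ref{prop:37}, plus the Frank--Lenzmann--Silvestre oscillation theorem via Propositions \ref{prop:sturm} and \ref{prop:64}) yields the radial non-degeneracy without any shooting; but that route applies only to waves realized as constrained minimizers, says nothing about uniqueness, and does not by itself cover every $\om>-n$, so it cannot be substituted wholesale for the ODE analysis your outline points to.
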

 For the proof of the uniqueness, we refer to \cite{HO, HO1, KT}. The non-degeneracy was established in  \cite{KT} and in a more general form, \cite{BO}. 
 We now review the known stability results for the ground states of \eqref{10}.  In the $L^2$ subcritical range,  $1<p<1+\f{4}{n}$, the ground states have been constructed  in \cite{Zhang}, together with the weak stability properties. This, together with the uniqueness yields the strong orbital stability for these waves\footnote{Although it looks as if this result  has not been stated explicitly in the literature}.  In addition,  the stability is known for the waves with  any $p\in (1, p_n^*)$,  $-n<\om<-n+\eps, 0<\eps<<1$, \cite{FuOh1}. On the other hand, there exists $N>>1$, so that for $\om>N$, the ground states $\phi_\om$ are   unstable for $1+\f{4}{n}<p<p_n^*$, \cite{Fu, FuOh1, FuOh2}.  
 
 We should mention that there are various  results on blow up for \eqref{10}, for generic initial data (not necessarily related to solitary waves),  for example  in the papers \cite{Ca, Zhang}.  Instability by blow up was unknown till the work of Ohta, \cite{Ohta}, who has shown that if $p>1+\f{4}{n}$, there exists $\om_{p,n}$, so that all solitons in the regime $\om>\om_{p,n}$ exhibit instability by blow up. 
 
 We should on the other hand point out that even for the classical case of the quantum harmonic oscillator, \eqref{10}, the (linear and non-linear) stability of the (unique) waves satisfying \eqref{20} is not fully understood. That is, the following question is open, to the best of our knowledge: for solutions of \eqref{20}, with $1+\f{4}{n}<p<p_n^*$, determine the set of $\om$, for which $\phi_\om$ is dynamically stable. Due to the results of Ohta and collaborators,  \cite{Fu, FuOh1, FuOh2, Ohta}, it seems natural to  conjecture the following.
 \begin{conjecture}
 	\label{con:1} Let $n\geq 1$. 
 	Show that for every $p:1+\f{4}{n}<p<p_n^*$, there exists $\om=\om_{p,n}$ so that the unique solution of \eqref{20} is stable whenever $-n<\om\leq \om_{p,n}$ and unstable in the regime $\om>\om_{p,n}$. 
 	\end{conjecture}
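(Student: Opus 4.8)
The plan is to reduce the stability/instability dichotomy to the Vakhitov--Kolokolov slope criterion and then to analyze the resulting charge function globally in $\om$. Writing $\cl_+=\ch+\om-p\phi_\om^{p-1}$ and $\cl_-=\ch+\om-\phi_\om^{p-1}$ (with $\ch=-\De+|x|^2$) for the two operators linearizing \eqref{20}, one has $\cl_-\phi_\om=0$, and since $\phi_\om>0$ is bell-shaped, $\cl_-\geq 0$ with one-dimensional kernel $\ker\cl_-=\mathrm{span}\{\phi_\om\}$, so $n_-(\cl_-)=0$. The non-degeneracy recorded in Proposition \ref{prop:lk} gives $\ker\cl_+=\{0\}$, while $\dpr{\cl_+\phi_\om}{\phi_\om}=(1-p)\int_{\rn}\phi_\om^{p+1}<0$ together with the standard ground-state spectral analysis yields the Morse index $n_-(\cl_+)=1$. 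Under these hypotheses the Grillakis--Shatah--Strauss theory applies: the wave $e^{-i\om t}\phi_\om$ is orbitally stable precisely when $d''(\om)>0$ and unstable when $d''(\om)<0$, where $d(\om)=E[\phi_\om]+\f{\om}{2}P[\phi_\om]$ and $d''(\om)$ has the same sign as $\f{d}{d\om}P[\phi_\om]=\f{d}{d\om}\norm{\phi_\om}_{L^2}^2$. Thus the conjecture is equivalent to the assertion that $\om\mapsto P[\phi_\om]$ is strictly increasing on $(-n,\om_{p,n})$ and strictly decreasing on $(\om_{p,n},\infty)$, i.e. that $\f{d}{d\om}P[\phi_\om]$ changes sign exactly once.

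Because $\cl_+$ is invertible, the branch $\om\mapsto\phi_\om$ is $C^1$ by the implicit function theorem; differentiating \eqref{20} in $\om$ gives $\cl_+\p_\om\phi_\om=-\phi_\om$, whence
\begin{equation*}
\f{d}{d\om}P[\phi_\om]=2\dpr{\p_\om\phi_\om}{\phi_\om}=-2\dpr{\cl_+^{-1}\phi_\om}{\phi_\om}.
\end{equation*}
The first step I would carry out is to pin down the two endpoints, where matters are tractable. As $\om\to -n^+$ the wave bifurcates from the zero solution along the lowest eigenvalue $\si_0(\ch)=n$, with Gaussian eigenfunction $e^{-|x|^2/2}$; a Lyapunov--Schmidt expansion yields $\phi_\om=c(\om+n)^{\f{1}{p-1}}(1+o(1))\,e^{-|x|^2/2}$ in the relevant norm, so $P[\phi_\om]=c'(\om+n)^{\f{2}{p-1}}(1+o(1))$ is strictly increasing near the bottom and the wave is stable there, recovering \cite{FuOh1}. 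At the opposite end $\om\to+\infty$ the wave concentrates at the origin; rescaling $\phi_\om(x)=\om^{\f{1}{p-1}}\psi(\sqrt{\om}\,x)$ turns \eqref{20} into $-\De\psi+\psi-\psi^p=O(\om^{-2})$ in the new variable, where the harmonic potential enters only at order $\om^{-2}$, so to leading order $\psi$ is the free ground state and $P[\phi_\om]=\om^{\f{2}{p-1}-\f{n}{2}}\norm{\psi}_{L^2}^2(1+o(1))$. Since $p>1+\f{4}{n}$ forces $\f{2}{p-1}-\f{n}{2}<0$, the charge decays and the wave is unstable for large $\om$, consistent with \cite{Ohta}. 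These two asymptotics guarantee at least one sign change, and hence the existence of some threshold $\om_{p,n}$.

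The genuinely hard part, and the reason the statement remains a conjecture, is the global uniqueness of the sign change, equivalently the unimodality of $P(\om)$. Unlike the translation-invariant problem, the trap destroys the exact scaling symmetry, so there is no closed form for $P(\om)$ and the monotonicity cannot be read off from a power law; the concentration rescaling above is only asymptotic. To close the argument one would need either a convexity/monotonicity formula for the Vakhitov--Kolokolov quantity $\dpr{\cl_+^{-1}\phi_\om}{\phi_\om}$ along the branch, or a Sturm-type analysis of the coupled radial ODEs satisfied by $\phi_\om$ and $\p_\om\phi_\om$ showing that once $\f{d}{d\om}P$ vanishes it cannot change sign back. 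I expect this to be the principal obstacle: it is a global, non-perturbative property of the whole family $\{\phi_\om\}_{\om>-n}$, whereas the available tools (implicit function theorem, bifurcation theory, semiclassical concentration) only control the two ends. A plausible but delicate route is to prove that at any critical point of $P$ the sign of $\f{d^2}{d\om^2}P$ is forced, ruling out a return crossing; making this rigorous for a general, non-explicit bell-shaped profile is precisely the missing ingredient.
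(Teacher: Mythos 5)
This statement is labeled a \emph{conjecture} in the paper, and the paper offers no proof of it: the authors explicitly pose it as an open problem and even entertain the possibility that it is false (``especially if it turns out that Conjecture \ref{con:1} does not hold and hence there is more than one turning point''). So there is nothing in the paper to compare your argument against, and the relevant question is whether your proposal actually closes the problem. It does not, and you say so yourself: the reduction to the Vakhitov--Kolokolov criterion via Grillakis--Shatah--Strauss, the bifurcation expansion at $\om\to -n^+$, and the semiclassical rescaling at $\om\to+\infty$ together yield only that $\f{d}{d\om}P[\phi_\om]$ is positive near one endpoint and negative near the other, hence that \emph{at least one} stability transition occurs. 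That much is already contained in the cited literature (\cite{FuOh1} for stability near $\om=-n$, \cite{Fu, FuOh2, Ohta} for instability at large $\om$). The content of the conjecture is precisely the \emph{uniqueness} of the transition --- that $\f{d}{d\om}P[\phi_\om]$ changes sign exactly once --- and your proposal identifies this as ``the missing ingredient'' without supplying it. A proposal whose final paragraph names the main claim as an unresolved obstacle is a research program, not a proof.

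Two smaller points on the parts you do sketch. First, the hypothesis $n_-(\cl_+)=1$ is not automatic from $\dpr{\cl_+\phi_\om}{\phi_\om}<0$ alone in the supercritical range $p>1+\f{4}{n}$, where $\phi_\om$ is no longer a constrained $L^2$-minimizer; you would need to invoke the Nehari/mountain-pass characterization of the ground state (or an oscillation argument as in Section \ref{sec:4} of the paper) to get the exact Morse index, and this should be stated rather than attributed to ``standard ground-state spectral analysis.'' Second, the two asymptotic regimes control $P$ only near the endpoints; since the trap breaks scaling, there is no identity forcing unimodality of $P(\om)$ in between, and your own suggestion (a sign-forcing condition on $\f{d^2}{d\om^2}P$ at critical points of $P$) is exactly the kind of global, non-perturbative statement that no one currently knows how to prove for a non-explicit profile. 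As written, the proposal establishes nothing beyond what the paper already cites, and the conjecture remains open.
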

 	Such a result would be immensely interesting, especially if it turns out that Conjecture \ref{con:1} does not hold  and hence there is more than one turning point in the stability behavior of the waves. 
 \subsection{Main results}
Regarding the construction of the waves,  we  study the constrained minimization problem 
 \begin{equation}
 \label{100}
 \inf\limits_{\int_{\rn} |u(x)|^2 dx=\la} E[u].
 \end{equation}
 for every $\la>0$. In other words, we will be seeking to minimize the energy for a fixed $L^2$ norm. 
 The constrained minimizers to these problems, if they exists,  are usually referred to as normalized waves. 
 The following is the main existence result of the paper.  
 \begin{theorem}
 	\label{theo:10} 
 	Let $n\geq 1, s\in (0,1]$, $\la>0$, $1<p<1+\f{4s}{n}$ and $V$ is a trapping potential, as defined above.   Then, the constrained minimization problem \eqref{100} has a solution $\phi$, a normalized ground state. Moreover, $\phi\in X^s$ is bell-shaped function, which satisfies the Euler-Lagrange equation \eqref{a:20}, in a distributional sense, with some  $\om=\om_\la$. 
 \end{theorem}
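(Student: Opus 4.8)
The plan is to solve \eqref{100} by the direct method in the calculus of variations, exploiting the compactness furnished by the trapping hypothesis on $V$. Write $m_\la:=\inf\{E[u]: u\in X_s,\ \|u\|_{L^2}^2=\la\}$. The first task is to show $m_\la>-\infty$ and that minimizing sequences are bounded in $X_s$. For this I would invoke the fractional Gagliardo--Nirenberg inequality $\|u\|_{L^{p+1}}^{p+1}\leq C\|(-\De)^{s/2}u\|_{L^2}^{\theta}\|u\|_{L^2}^{p+1-\theta}$ with $\theta=\f{n(p-1)}{2s}$. The subcriticality assumption $p<1+\f{4s}{n}$ is exactly what makes $\theta<2$, so that after Young's inequality the nonlinear term is absorbed into $\f14\|(-\De)^{s/2}u\|_{L^2}^2$ at the cost of an additive constant depending only on $\la$. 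This yields the coercivity bound $E[u]\geq \f14\|(-\De)^{s/2}u\|_{L^2}^2+\f12\int_{\rn}V|u|^2\,dx-C(\la)$ on the constraint set, so $m_\la$ is finite and every minimizing sequence is bounded in $X_s$.

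The heart of the argument is the following compactness claim, which replaces concentration--compactness and is where the trapping of $V$ enters decisively: the embedding $X_s\hookrightarrow L^q(\rn)$ is compact for every $2\leq q<\f{2n}{n-2s}$ (with $q$ unrestricted when $n\leq 2s$). The point is that for a bounded sequence $(u_k)\subset X_s$ one has $\int_{|x|>R}|u_k|^2\,dx\leq V(R)^{-1}\int_{\rn}V|u_k|^2\,dx\to 0$ uniformly in $k$ as $R\to\infty$, since $V(R)\to\infty$; thus no mass escapes to infinity. Combining this tightness with the local compactness of $\dot H^s(\rn)\hookrightarrow L^2_{\loc}(\rn)$ gives strong convergence in $L^2$, and interpolation with the $\dot H^s$ bound upgrades this to $L^q$ for the stated range; note $p+1$ lies in this range precisely because $p<1+\f{4s}{n}<\f{n+2s}{n-2s}$. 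Granting this, I take a minimizing sequence $u_k\rightharpoonup \phi$ in $X_s$; the constraint $\|\phi\|_{L^2}^2=\la$ passes to the limit by $L^2$ convergence, the two quadratic terms are weakly lower semicontinuous, and the term $\int_{\rn}|u_k|^{p+1}$ converges by the $L^{p+1}$ convergence. Hence $E[\phi]\leq \liminf E[u_k]=m_\la$, so $\phi$ is a minimizer. A standard Lagrange multiplier computation then produces $\om=\om_\la$ for which $\phi$ solves \eqref{a:20} distributionally.

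It remains to arrange that $\phi$ is bell-shaped and positive, which I would obtain by rearrangement. First, replacing $\phi$ by $|\phi|$ does not increase $E$, since $\|(-\De)^{s/2}|\phi|\|_{L^2}\leq \|(-\De)^{s/2}\phi\|_{L^2}$ while the potential and nonlinear terms depend only on $|\phi|$; so one may assume $\phi\geq 0$. Next I pass to the symmetric decreasing rearrangement $\phi^*$. The $L^2$ and $L^{p+1}$ norms are preserved by equimeasurability, the fractional P\'olya--Szeg\H{o} inequality gives $\|(-\De)^{s/2}\phi^*\|_{L^2}\leq\|(-\De)^{s/2}\phi\|_{L^2}$, and, crucially, the potential energy does not increase: applying the layer-cake formula to $f=\phi^2$ and using that $V$ is radially increasing, each superlevel set $\{f^*>t\}$ is a centered ball of the same measure as $\{f>t\}$ and therefore minimizes $\int_A V$ among sets of that measure, whence $\int_{\rn}V|\phi^*|^2\,dx\leq \int_{\rn}V|\phi|^2\,dx$. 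Thus $\phi^*$ is again a minimizer and is radial and radially nonincreasing, i.e. bell-shaped; strict positivity follows from the strong maximum principle for $(-\De)^s+V+\om$ applied to the elliptic equation.

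The main obstacle is the compactness claim: establishing the compact embedding $X_s\hookrightarrow L^{p+1}$ driven by $V\to\infty$, together with the fractional P\'olya--Szeg\H{o} inequality used for bell-shapedness, are the two nonlocal ingredients requiring care; the remaining steps are the routine skeleton of the direct method.
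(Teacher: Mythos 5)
Your proposal is correct and follows essentially the same route as the paper: coercivity via the fractional Gagliardo--Nirenberg inequality (using $\tfrac{n(p-1)}{2s}<2$), compactness of minimizing sequences from the trapping of $V$, bell-shapedness via the fractional P\'olya--Szeg\H{o} inequality together with the bathtub-type rearrangement inequality for the potential term, and a standard Lagrange multiplier computation for the Euler--Lagrange equation. The only differences are cosmetic: the paper symmetrizes the minimizing sequence at the outset (which, via the equality case of P\'olya--Szeg\H{o}, also shows that \emph{every} minimizer is a translate of a bell-shaped function) and obtains $L^{p+1}$-tail control from the pointwise decay of bell-shaped functions plus Kolmogorov--Riesz, whereas you symmetrize the limiting minimizer and get $L^{p+1}$ compactness by interpolating strong $L^2$ convergence against the $\dot H^s$ bound --- both mechanisms are valid and rest on the same tightness estimate $\int_{|x|>R}|u_k|^2\,dx\leq V(R)^{-1}\int V|u_k|^2\,dx$.
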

 {\bf Note:} We establish better {\it a posteriori} smoothness and decay results for $\phi$, see Proposition \ref{prop:apost} below. 
 
 Next, we state our results on  the stability of the waves.  Before we move on with the actual statement, we shall need to discuss the related issue of global well-posedness and energy conservation, which is crucial in the orbital stability considerations.  Note that such results are available in the literature, especially in the classical case $s=1$, but definitely not in the generality of potentials that we would like to consider herein. Then, there is the more delicate issue of (formally) conserved  quantities, e.g. $E, P$, in particular the level of regularity needed for the data that is required in order to ensure the actual conservation of energy and $L^2$ norm along the evolution. These subtle points  go beyond the scope of the paper, and more in depth research is required for their full understanding.  For the purposes of this paper,  we assume the said well-posedness (and conservation laws) for the time evolution of \eqref{a:10}. More precisely, 
 \begin{definition}
 	\label{defi:global} We say that the fractional semilinear Schr\"odinger equation \eqref{a:10} is globally well-posed and conserves energy, if  every initial data $u_0\in H^s[\rn]$ produces unique global solution $u(t, \cdot)\in C([0,T], H^s(\rn))$ for each $T>0$ and 
 	\begin{enumerate} 
 		\item the solution map $u_0\to u(t, \cdot)$ is continuous in the norm of $C([0,T], H^s(\rn))$ for small enough times $T$. 
 		\item The energy $E[u]$ and the $P[u]$ are conserved globally in time, that is for each $t>0$, \\ $E[u(t)]=E[u_0]$,  		$P[u(t)]=P[u_0]$. 
 	\end{enumerate}
 \end{definition}
 {\bf Note:} For our purposes, it   suffices to  assume these properties only close to solitons. Note that these assumptions are only needed for the statement of orbital stability of the waves. 
 
 We have the following result regarding the stability of the waves. 
 \begin{theorem}
 	\label{theo:norm}
 	For $n\geq 1, s\in (0,1]$, $\la>0$, $1<p<1+\f{4s}{n}$, the  normalized ground states $\phi$ of the Schr\"odinger equation \eqref{a:10}, with $\|\phi\|^2=\la$, are non-degenerate, in the sense that  $$
 	\cl_+:=(-\De)^s+ V(x) + \om_\la - p \phi_\la^{p-1}
 	$$
 	 has a trivial kernel, i.e. $Ker[\cl_+]=\{0\}$.

 	Finally, assuming global well-posedness and energy conservation, in the sense of Definition \ref{defi:global}, the waves $e^{- i \om t} \phi$ are strongly orbitally stable in the $H^s$ norm.   
 	More  precisely, for all $\eps>0$, there is $\de>0$, so that whenever 
 	$\|u_0-\phi_\om\|_{H^s(\rn)}<\de$, one has 
 	$$
 	\sup_{t>0} \inf_{\theta\in\rone}  \|e^{i \theta} u(t,x) - e^{-i \om t } \phi_\om\|_{H^s(\rn)}<\eps. 
 	$$
 \end{theorem}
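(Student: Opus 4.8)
\emph{Strategy.} I would treat the non-degeneracy and the stability assertions separately, both resting on the spectral picture of a pair of linearized operators. Alongside $\cl_+$ set $\cl_-:=(-\De)^s+V+\om_\la-\phi^{p-1}$, so that the Euler--Lagrange equation \eqref{a:20} is exactly $\cl_-\phi=0$. Because $\phi>0$ is bell-shaped and $(-\De)^s$ generates a positivity-improving semigroup, $\cl_-$ (a real multiplicative perturbation of $\ch$) has positivity-improving resolvent; hence its ground state is simple and positive, and since $\cl_-\phi=0$ with $\phi>0$ this forces $\cl_-\ge0$ and $Ker[\cl_-]=\mathrm{span}\{\phi\}$. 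Writing $\cl_+=\cl_--(p-1)\phi^{p-1}$ and testing on $\phi$ gives $\dpr{\cl_+\phi}{\phi}=-(p-1)\int_{\rn}\phi^{p+1}\,dx<0$, so $\cl_+$ has at least one negative eigenvalue; on the other hand the constrained minimality of $\phi$ for \eqref{100} yields $\cl_+\ge0$ on the codimension-one space $\{\phi\}^{\perp}$, so the Morse index is at most one. Thus $\cl_+$ has exactly one negative eigenvalue, which (again by positivity improvement) is simple with positive, hence radial, eigenfunction $\chi_0$.

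\emph{Non-degeneracy.} Since $\cl_+$ commutes with rotations, I would decompose $L^2(\rn)$ into angular-momentum sectors and argue that $Ker[\cl_+]$ meets only the radial one. Differentiating \eqref{a:20} in $x_j$ gives $\cl_+(\p_{x_j}\phi)=-(\p_{x_j}V)\phi=-V'(|x|)\,\f{x_j}{|x|}\,\phi$. Restricted to the sector $\ell=1$ this reads $\cl_+^{(1)}\eta=V'\phi$ for the radial profile $\eta:=-\p_r\phi$, which is \emph{strictly positive} on $(0,\infty)$ because $\phi$ is bell-shaped, while $V'\phi>0$ precisely because $V$ is trapping. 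Pairing this positive supersolution against the positive sectorial ground state $g_0$ of $\cl_+^{(1)}$ (positivity of the sectorial ground state for $(-\De)^s$ being the technical input, in the spirit of Frank--Lenzmann) gives $e_0^{(1)}\dpr{\eta}{g_0}=\dpr{V'\phi}{g_0}>0$ with $\dpr{\eta}{g_0}>0$, whence the bottom eigenvalue $e_0^{(1)}>0$. Monotonicity of the fractional kinetic energy in the angular momentum then yields $e_0^{(\ell)}\ge e_0^{(1)}>0$ for all $\ell\ge1$, so $\cl_+$ is strictly positive off the radial sector and every kernel element is radial.

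\emph{The main obstacle.} What remains, and is the crux, is to rule out a radial kernel: to show that the second radial eigenvalue of $\cl_+^{(0)}$ is strictly positive, its first being the negative $\chi_0$. For $s=1$ this is exactly where the classical ODE/Sturm oscillation arguments enter, and they are precisely what is unavailable for $0<s<1$. My plan here is to pass to the Caffarelli--Silvestre extension, turning $(-\De)^s$ into a local degenerate-elliptic operator on $\rn\times(0,\infty)$, and to run a nodal/oscillation analysis for the extended radial eigenproblem together with the ground-state substitution $\psi=\phi v$, which represents $\dpr{\cl_-\psi}{\psi}$ as a nonnegative Dirichlet form vanishing only at $v\equiv\mathrm{const}$. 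The difficulty is that the perturbation $-(p-1)\phi^{p-1}$ destroys this vanishing, so the argument must quantify the competition between the Dirichlet form and the potential term in the radial sector; carrying this out rigorously for all $0<s\le1$ and a general trapping $V$ is the technical heart of the proof.

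\emph{Strong orbital stability.} For the dynamical statement I would use the variational characterization. The trapping hypothesis $V(r)\to\infty$ makes the embedding $X_s\hookrightarrow L^2(\rn)$ compact, so any minimizing sequence for \eqref{100} is precompact in $X_s$ up to phase --- and, crucially, there is no translation symmetry to quotient out, since $V$ breaks it. Hence the minimizer set $\mathcal G_\la$ is compact and, by the classical Cazenave--Lions argument (rescale to exact mass, invoke conservation of $E$ and $P$ from Definition \ref{defi:global}, and use precompactness), orbitally stable \emph{as a set}. The non-degeneracy proved above makes each orbit $\{e^{i\theta}\phi\}$ an isolated point of $\mathcal G_\la/S^1$, so $\mathcal G_\la$ is a finite union of mutually separated orbits. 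Finally, set-stability together with continuity in $t$ of the $H^s$-flow forbids the solution from jumping the gap between distinct orbits: a datum starting near $\{e^{i\theta}\phi\}$ stays near that single orbit for all time, which is the asserted strong orbital stability. The one point I would flag as delicate is the mismatch between the energy space $X_s$ and the $H^s$-topology in the statement, reconciling which relies on the well-posedness and conservation inputs assumed in Definition \ref{defi:global}.
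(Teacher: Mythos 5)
Your spectral setup (counting the negative eigenvalue of $\cl_+$ via $\dpr{\cl_+\phi}{\phi}<0$ together with $\cl_+|_{\{\phi\}^\perp}\ge 0$ from constrained minimality, the Perron--Frobenius treatment of $\cl_-$, and the reduction to angular sectors) matches the paper, and your argument for the sectors $\ell\ge 1$ is essentially the paper's Proposition \ref{prop:63} in dual form: both rest on the identity $\cl_+(\p_{x_j}\phi)=-V'(|x|)\f{x_j}{|x|}\phi$ paired against the positive sectorial ground state, with $V'>0$ supplying the sign. Your orbital stability argument takes a genuinely different but viable route: Cazenave--Lions set-stability of the compact minimizer set plus isolation of orbits from non-degeneracy, versus the paper's direct modulation--coercivity bootstrap (decomposing $u_n-e^{i\theta_n}\phi$ along $\phi$ and its orthogonal complement and invoking the coercivity \eqref{400}). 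The paper's version has the advantage of producing the quantitative bound \eqref{490} directly and of not needing to classify the full minimizer set.

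The genuine gap is exactly where you flag it: ruling out a zero eigenvalue in the radial sector, i.e.\ showing $\si_1(\cl_{+,0})>0$. Your proposed Caffarelli--Silvestre extension plus ground-state substitution is, as you yourself note, broken by the perturbation $-(p-1)\phi^{p-1}$, and no completed argument is offered; this is the technical heart of the theorem and cannot be left as a plan. The paper closes it in two steps. First (Proposition \ref{prop:sturm}) it extends the Frank--Lenzmann--Silvestre oscillation theorem (Theorem \ref{FLS:10}) to unbounded increasing potentials by truncating $W$ at level $W(N)$, extracting $H^s$-convergent ground and first excited radial eigenfunctions as $N\to\infty$, and showing the nodal radius $r_N$ has a limit point in $(0,\infty)$ (the degenerate alternatives $r_N\to 0$ and $r_N\to\infty$ would force the limiting eigenfunction to have a sign, contradicting its orthogonality to the bell-shaped ground state); this yields a second radial eigenfunction $\Psi_1$ with exactly one sign change at some $r_0$. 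Second, assuming $\si_1(\cl_{+,0})=0$, it tests $\Psi_1$ against $\Phi:=\phi\left(\phi^{p-1}(r_0)-\phi^{p-1}\right)$: since $\cl_{+}\phi=-(p-1)\phi^p$ and $\phi\perp Ker[\cl_+]$, both $\phi$ and $\phi^p$ are orthogonal to the kernel, so $\dpr{\Phi}{\Psi_1}=0$; yet bell-shapedness of $\phi$ makes $\Phi$ change sign at the same $r_0$ as $\Psi_1$, so $\Phi\Psi_1\ge 0$ and is not identically zero --- a contradiction. Without this (or an equivalent substitute), your non-degeneracy claim, and hence the coercivity input to either stability argument, is unproven.
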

  {\bf Remarks:} 
  \begin{itemize}
  	\item The results of Theorem \ref{theo:norm} directly generalize the classical results for the quantum harmonic oscillator model, $s=1$, $V(x)=|x|^2$. 
  	\item The uniqueness of the wave $\phi$, both as a solution of the profile equation to \eqref{a:10}  and as a constrained minimizer of \eqref{100}  is left as an open problem. Clearly, uniqueness in the PDE context is harder than uniqueness of minimizers. 
  	\item We feel comfortable conjecturing a result similar to Conjecture \ref{con:1}. Indeed, at this point the question is wide open, even for values of $\om$ close to the threshold $:-\si_0(\ch)$ as well as large values of $\om$.  
  \end{itemize}
 {\bf Acknowledgement:} We would like to thank our frequent collaborator Sevdzhan Hakkaev for numerous insightful conversations on these topics.  
 
 The paper is organized as follows. In Section \ref{sec:2}, we present some background material, such as rearrangement inequalities, Szeg\"o's inequality (for fractional Laplacians), subspaces of spherical harmonics and relations to spectral theory,  among others. Most of which is well-known, although we present somewhat concise versions/corollaries of the actual results in the literature, which better suit our purposes. In Section \ref{sec:3}, we give the details of the variational construction.  In Section \ref{sec:4}, we first provide a generalization of the Sturm oscillation theorem for the second eigenfunction, recently established in \cite{FLS}, which is then used to establish the non-degeneracy of the wave. We finish Section \ref{sec:4} with a proof of orbital stability of the waves. Finally, in the Appendix, we provide a detailed proof of Proposition \ref{prop:apost}, which yields additional {\it a posteriori} smoothness   properties  of the waves. These  are needed in the arguments, but they may be of independent interest as well.

\section{Preliminaries}
\label{sec:2}
In this section, we collect some preliminary results (as well as some straightforward, mostly well-known calculations), which will be helpful in the sequel. We  introduce some notions, definitions  and notations. 

\subsection{Function spaces and the fractional Laplacian} 

We use the Fourier transform and its inverse in the form 
$$
\hat{f}(\xi)=\int_{\rn} f(x) e^{- i  x\xi} dx, f(x)= (2\pi)^{-n} \int_{\rn} \hat{f}(\xi) e^{ i  x\xi} d\xi
$$
The operator $(-\De)^s$ is defined via its transform as follows $\widehat{(-\De)^s f}(\xi)=|\xi|^{2s} \hat{f}(\xi) $. In particular, we use the notation $|\nabla|=\sqrt{-\De}$. The Sobolev spaces are defined as the closure of the Schwartz functions in $\|f\|_{W^{s,p}}:=\| (Id-\De)^{s/2} f\|_{L^p}$, where $s\in \rone,  1<p<\infty$. 
The Green's function of $((-\De)^s+\la)$ was constructed for example in \cite{FLS}, see Lemma C1 in Appendix C. More precisely, with the notation $\hat{G}_\la(\xi)=\f{1}{|\xi|^{2s}+\la}, \la>0$, there is the representation 
\begin{equation}
\label{eq:g} 
((-\De)^s+\la)^{-1} f(x)=\int_{\rn} G_\la(x-y) f(y) dy,
\end{equation}
where the function $G_\la$ satisfies the following 
\begin{itemize}
	\item $G_\la$ is bell-shaped on $\rn$, $G\in C^\infty(\rn\setminus\{0\})$
	\item $G_\la\in L^r(\rn): 1-\f{1}{r}<\f{2s}{n}$. 
\end{itemize}
\subsection{Rearrangement inequalities} 
Recall the rearrangement inequalities 
\begin{equation}
\label{r:10} 
\int_{\rn}  f(x) g(x) dx\leq \int_{\rn}  f^*(x) g^*(x) dx
\end{equation}
and in addition, for a non-decreasing  function $W$, 
\begin{equation}
\label{r:20} 
\int_{\rn}  W(x) f(x) dx\geq \int_{\rn} W(x) f^*(x) dx
\end{equation}
The following result is sometimes referred to as Fractional Polya-Szeg\"o inequality, for which one can consult the recent work \cite{FL} or the direct and easy proof, which can be found in  Proposition 3, in   \cite{FSS}. 
\begin{proposition}
	\label{prop:11}
	Let $s \in (0,1]$, $n\geq 1$.  Then, for all functions $u\in H^s(\rn)$, we have that its decreasing rearrangement $u^*\in H^s(\rn)$ and moreover
	\begin{equation}
	\label{PS}
	\||\nabla|^s u\|_{L^2(\rn)}\geq \||\nabla|^s u^*\|_{L^2(\rn)}.
	\end{equation}
	In addition, equality is achieved if and only if there exists $x_0\in \rn$ and a decreasing  function $\rho:\rone_+ \to \rone_+$, so that $u(x)=\rho(|x-x_0|)$.
\end{proposition}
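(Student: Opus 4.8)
The plan is to reduce the seminorm $\||\nabla|^s u\|_{L^2}^2 = \dpr{(-\De)^s u}{u}$ to a superposition of heat-semigroup bilinear forms, to which the Riesz rearrangement inequality applies for each parameter value separately. The endpoint $s=1$ is the classical P\'olya--Szeg\"o inequality $\|\nabla u\|_{L^2}\ge \|\nabla u^*\|_{L^2}$ (which can also be recovered by letting $s\uparrow 1$), so I focus on $0<s<1$. Here I would use the subordination identity $\la^s = c_{n,s}\int_0^\infty (1-e^{-t\la})\,t^{-1-s}\,dt$, valid for $\la\ge 0$, applied to $\la=|\xi|^{2s}$ on the Fourier side, to write
\begin{equation}
\label{PS:sub}
\||\nabla|^s u\|_{L^2}^2 = c_{n,s}\int_0^\infty \Big( \|u\|_{L^2}^2 - \dpr{e^{t\De}u}{u}\Big)\,\frac{dt}{t^{1+s}},
\end{equation}
where $e^{t\De}$ is the heat semigroup, $\dpr{e^{t\De}u}{u}=\iint p_t(x-y)u(x)u(y)\,dx\,dy$, and $p_t(z)=(4\pi t)^{-n/2}e^{-|z|^2/4t}$ is the positive, symmetric, strictly decreasing heat kernel. (Since the energy space consists of real-valued functions, I treat $u$ as real throughout.)

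The core step is to bound the integrand in \eqref{PS:sub} at each fixed $t$. Since $p_t>0$, we have $\dpr{e^{t\De}u}{u}\le \iint p_t(x-y)|u(x)||u(y)|\,dx\,dy$, and because $p_t$ is already symmetric decreasing, the Riesz rearrangement inequality applied to the triple $|u|$, $p_t$, $|u|$ gives $\iint p_t(x-y)|u(x)||u(y)|\,dx\,dy \le \iint p_t(x-y)u^*(x)u^*(y)\,dx\,dy = \dpr{e^{t\De}u^*}{u^*}$. Combined with the fact that rearrangement preserves the $L^2$ norm, $\|u\|_{L^2}=\|u^*\|_{L^2}$, this yields $\|u\|_{L^2}^2-\dpr{e^{t\De}u}{u}\ge \|u^*\|_{L^2}^2-\dpr{e^{t\De}u^*}{u^*}\ge 0$ for every $t>0$. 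Integrating against $c_{n,s}t^{-1-s}\,dt$ and using \eqref{PS:sub} for both $u$ and $u^*$ gives \eqref{PS}; in particular $u^*$ has finite seminorm and, since $\|u^*\|_{L^2}=\|u\|_{L^2}$, belongs to $H^s(\rn)$.

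For the equality case I would argue as follows. If \eqref{PS} is an equality, then the nonnegative integrand difference in \eqref{PS:sub} integrates to zero, so $\dpr{e^{t\De}u}{u}=\dpr{e^{t\De}u^*}{u^*}$ for almost every $t$, hence for all $t>0$ by continuity of $t\mapsto \dpr{e^{t\De}u}{u}$. This forces equality in both inequalities of the previous paragraph for every $t$: first $\dpr{e^{t\De}u}{u}=\iint p_t(x-y)|u(x)||u(y)|\,dx\,dy$, which (again using $p_t>0$) forces $u$ to have constant sign, so we may assume $u\ge 0$; and second, equality in the Riesz inequality with the strictly symmetric decreasing kernel $p_t$. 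I would then invoke Lieb's sharp form of the Riesz rearrangement inequality, whose rigidity statement for a strictly decreasing kernel yields that $u$ coincides with a translate of its rearrangement, $u(x)=u^*(x-x_0)$ for some $x_0\in\rn$; writing $u^*(\cdot)=\rho(|\cdot|)$ with $\rho$ decreasing gives the stated form. The main obstacle is precisely this equality analysis: the forward inequality is soft once \eqref{PS:sub} is available, but pinning down the translate requires the full strength of Lieb's equality characterization together with the strict monotonicity of $p_t$, preceded by the reduction to $u\ge 0$.
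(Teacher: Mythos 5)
Your argument is correct and is essentially the proof the paper relies on: Proposition \ref{prop:11} is not proved in the text but is quoted from \cite{FL} and from Proposition 3 of \cite{FSS}, and the ``direct and easy proof'' referred to there is exactly this reduction of the quadratic form, via subordination, to bilinear forms with positive symmetric decreasing kernels, followed by the Riesz rearrangement inequality and Lieb's rigidity statement for strictly decreasing kernels. Two minor points to tidy up: the subordination identity should be applied with $\la=|\xi|^2$ (so that $\la^s=|\xi|^{2s}$), not with $\la=|\xi|^{2s}$ as written; and at the endpoint $s=1$, where the subordination formula degenerates, the equality characterization does not follow from the classical P\'olya--Szeg\"o \emph{inequality} alone but needs the Brothers--Ziemer rigidity theorem (which requires excluding plateaus of $u^*$), a caveat that the paper's own statement also glosses over.
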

Next, we need to discuss  the operator $\ch=(-\De)^s+V$, where  $V$ trapping potential, as assumed above.  To that end, we start with a brief  introduction of  the spaces of spherical harmonics. 
\subsection{Spherical harmonics and representations of  fractional Schr\"odinger operators} It is well-known that the Laplacian on $\rn$  in spherical coordinates is given by 
$$
\De= \p_{rr}+\f{n-1}{r}\p_r+\f{\De_{\sn}}{r^2}.
$$
The spherical Laplacian $\De_{\sn}$ has only point spectrum, in fact $\si(-\De_{\sn})=\{l(l+n-2), l=0,1, \ldots\}$, where each eigenvalue has a subspace of eigenvectors corresponding to $l(l+n-2)$,  $\cx_l\subset L^2(\sn)$, which gives rise to the orthogonal decomposition $ L^2(\sn)=\oplus_{l=0}^\infty  \cx_l$.  Moreover, $\cx_0=span[1]$, while $\cx_1=span[\f{x_j}{r}, j=1, \ldots,n]$. Denote $\cx_{\geq 1}:=\oplus_{l=1}^\infty  \cx_l$, which induces the representation 
$$
L^2(\rn) = L^2(r^{n-1} dr, \cx_0)\oplus L^2(r^{n-1} dr, \cx_{\geq 1})
$$
Thus, we introduce the radial subspace $L^2_{rad}:=L^2(r^{n-1} dr, \cx_0)$. Note that 
$$
-\De|_{L^2_{rad}}=-\p_{rr}-\f{n-1}{r}\p_r,
$$
while 
$$
-\De|_{L^2(r^{n-1} dr, \cx_{\geq 1})}\geq -\p_{rr}-\f{n-1}{r}\p_r+\f{n-1}{r^2}.
$$
For every Banach space $X\hookrightarrow L^2(\rn)$, we denote $X_{rad}:=X\cap L^2_{rad}$. 

For the operators under consideration, $\ch=(-\De)^s+V$, since $V$ is radial, we see that $\ch$ acts invariantly on $L^2(r^{n-1} dr, \cx_{l})$ for each $l$. A moment thought reveals the action of $\ch$ on each such subspace is $\ch_l: L^2(r^{n-1} dr, \cx_{l})\to L^2(r^{n-1} dr, \cx_{l})$, given by the formula 
$$
\ch_l[g Y_l]=\left(\left(-\p_{rr}-\f{n-1}{r}\p_r+\f{l(l+n-2)}{r^2}\right)^{s} g+V g\right)Y_l, 
$$
where $Y_l\in \cx_l$, $g\in L^2_{rad}$. So, 
$$
\ch=\oplus_{l=0}^\infty \ch_l:\oplus_{l=0}^\infty  L^2(r^{n-1} dr, \cx_{l}) \to \oplus_{l=0}^\infty  L^2(r^{n-1} dr, \cx_{l}). 
$$
We shall use the notation, $\ch_{\geq 1}:=\oplus_{l=1}^\infty \ch_l$ for the operator $\ch$ restricted to  $ \oplus_{l=1}^\infty  L^2(r^{n-1} dr, \cx_{l})$. 
 Clearly, the  operator $\ch_l$ is unitarily equivalent to the following operator, denoted again by $\ch_l$, 
$$
\ch_l =\left(-\p_{rr}-\f{n-1}{r}\p_r+\f{l(l+n-2)}{r^2}\right)^{s}+V, 
$$
acting on $L^2_{rad}$, with domain $D(\ch_l)=D(\ch)\cap  L^2(r^{n-1} dr, \cx_{l})$. 
It is clear that  
$$
\si(\ch)=\cup_{l=0}^\infty \si(\ch_l). 
$$
and   $\ch_0<\ch_1<\ch_2<\ldots $. 

Sometimes, e.g. \cite{FL, FLS}, the spectrum (and more specifically the eigenvalues)  of $\ch_0$ is referred to as  radial spectrum/eigenvalues. We adopt this notation. 

\subsection{Some spectral theory for  $\ch$} 
Assume for this section, that $V$ is a real-valued, bounded from below, but otherwise it is unbounded, with at most polynomial growth. We consider the skew-symmetric quadratic form associated to $\ch$, namely 
$$
Q_\ch(u,v)= \dpr{|\nabla|^s u}{\nabla|^s v}+ \int V(x) u(x) \bar{v}(x) dx. 
$$
with form domain\footnote{Due to the polynomial growth assumption for $V$, Schwartz functions are a reliable dense set in all the spaces that we introduce} $H^s(\rn)\cap L^2(V(x) dx)$. Clearly, this can be extended to a self-adjoint operator, with domain $H^{2s}(\rn)\cap L^2(V^2(x) dx)$. 

Clearly, for large enough $M$, say $\inf V(x)>-M$, we have $(-\De)^s+V+2M\geq (-\De)^s+M>0$, so 
$
0<((-\De)^s+V+2M)^{-1}<((-\De)^s+M)^{-1}$ and also $0<((-\De)^s+V+2M)^{-2}<((-\De)^s+M)^{-2}.
$
In particular, 
\begin{equation}
\label{eq:g10} 
\|((-\De)^s+V+2M)^{-1}f\|_{L^2}\leq \|((-\De)^s+M)^{-1}f\|_{L^2}\leq C \|f\|_{H^{-2s}}. 
\end{equation}
From \eqref{eq:g10}, we have that 
$((-\De)^s+V+2M)^{-1}:H^{-2s}(\rn)\to L^2(\rn)$. By duality, we also have $((-\De)^s+V+2M)^{-1}: L^2(\rn)\to H^{2s}(\rn)$ or 
\begin{equation}
\label{eq:g12} 
\|((-\De)^s+V+2M)^{-1}g\|_{H^{2s}}\leq \|g\|_{L^2}. 
\end{equation}
Let us formulate the results in a lemma, which may be useful in other situations. 
\begin{lemma}
	Assume that $n\geq 2$, $s\in (0,1]$ and $V$ is a continuous function, bounded from below. Then, for each $a\in [0,1]$ and for all large enough $N$, we have the bounds 
 \begin{equation}
 \label{eq:g15} 
 \|((-\De)^s+V+N)^{-1}g\|_{H^{2sa}}\leq C \|g\|_{H^{-2s(1-a)}}. 
 \end{equation}
\end{lemma}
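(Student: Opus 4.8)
The plan is to obtain \eqref{eq:g15} by complex interpolation between the two endpoint cases $a=0$ and $a=1$, both of which have essentially already been established in the discussion preceding the statement. First I would fix $N$ large enough that $\inf V + N>0$, so that $T_N:=((-\De)^s+V+N)^{-1}$ is a well-defined, bounded, positive, self-adjoint operator on $L^2(\rn)$; this is exactly the role of the hypothesis that $V$ is bounded below together with taking $N$ large, and it is the reason the statement only claims the bound for large $N$.

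For the endpoint $a=0$, estimate \eqref{eq:g10} gives $\|T_N g\|_{L^2}\leq C\|g\|_{H^{-2s}}$, i.e. $T_N:H^{-2s}(\rn)\to L^2(\rn)$ boundedly. For the endpoint $a=1$, the dual estimate \eqref{eq:g12} gives $T_N:L^2(\rn)\to H^{2s}(\rn)$ boundedly; this follows by duality from the $a=0$ bound together with the self-adjointness of $T_N$ for real resolvent parameter, the $L^2$-dual of $H^{-2s}$ being $H^{2s}$. These two statements are precisely \eqref{eq:g15} for $a=0$ and $a=1$ respectively.

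Next I would invoke the standard complex interpolation of the $L^2$-based Bessel potential (Sobolev) spaces, namely $[H^{\sigma_0}(\rn),H^{\sigma_1}(\rn)]_\theta=H^{(1-\theta)\sigma_0+\theta\sigma_1}(\rn)$. Applying this to the domain pair $(H^{-2s},L^2)$ and the target pair $(L^2,H^{2s})$ with interpolation parameter $\theta=a$, one has $[H^{-2s},L^2]_a=H^{-2s(1-a)}$ and $[L^2,H^{2s}]_a=H^{2sa}$. Since $T_N$ maps the two endpoints of the domain pair boundedly into the corresponding endpoints of the target pair, the interpolation theorem for a single fixed linear operator yields that $T_N:H^{-2s(1-a)}(\rn)\to H^{2sa}(\rn)$ is bounded for every $a\in[0,1]$, which is exactly \eqref{eq:g15}.

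Since the argument is a direct interpolation between two bounds already in hand, I do not expect a serious obstacle; the only points requiring a little care are the choice of $N$ ensuring that the resolvent is bounded and positive (handled by $V$ bounded below) and the identification of the intermediate Sobolev norms as the correct complex interpolants, which is classical. I would avoid a purely functional-calculus proof, since $\|\cdot\|_{H^{2sa}}$ is not directly comparable to $\|(\ch+N)^a\,\cdot\|_{L^2}$ in the presence of the potential $V$; interpolation between the two endpoint resolvent estimates sidesteps that difficulty entirely.
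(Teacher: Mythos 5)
Your proposal is correct and follows exactly the paper's own route: the paper disposes of \eqref{eq:g15} with the single remark that it follows by interpolation between \eqref{eq:g10} and \eqref{eq:g12}, which is precisely the complex interpolation argument you spell out. Your additional care about the choice of $N$ and the identification of the interpolation spaces only fills in details the paper leaves implicit.
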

{\bf Note:} The estimate \eqref{eq:g15} follows by interpolation between the estimates \eqref{eq:g10} and \eqref{eq:g12}. 
Since in addition $((-\De)^s+V+2M)^{-1}:L^2\to D(\ch)$,  by Kolmogorov-Relich's compactness criteria, $D(\ch)=H^{2s}(\rn)\cap L^2(V^2(x) dx)$ compactly embeds into $L^2(\rn)$, it follows that all $\si(\ch)$ is eigenvalues of finite multiplicity. In addition, these are  sequence of  reals 
$$
\si_0(\ch)\leq \ldots \si_n(\ch)\leq \ldots 
$$
with $\lim_n \si_n(\ch)=\infty$. 
By the Riesz characterization of eigenvalues, we have 
$$
\si_0(\ch)=\inf_{\|u\|=1} [\| |\nabla|^s u\|^2+ \int V(x) u^2(x) dx]. 
$$
By the rearrangement inequalities, more specifically the fractional Polya-Szeg\"o inequality \eqref{PS} and \eqref{r:20}, we conclude the Perron-Frobenius type result, namely that there any   eigenfunction  corresponding to the bottom of the spectrum $\si_0(\ch)$ must be  bell-shaped. 
This implies that $\si_0(\ch)$ is a simple eigenvalue (assuming that there are two different such eigenfucntions, they cannot be orthogonal) and its eigenfunction is positive. 

There is much richer theory concerning the spectrum (and the related eigenfunctions) for $\ch$. Indeed, in the classical case of the Laplacian, i.e. $s=1$ and bounded potentials and one spatial dimension, the Sturm-Liouville theory applies and one has pretty satisfactory theory - every eigenvalue $\si_j(\ch)$  is simple and each eigenfunction has exactly $j$ sign changes. In the recent work, \cite{FL}, the authors have extended this to the case $s\in (0,1)$, still in the one dimensional case. In a subsequent development, \cite{FLS} have extended this to  higher dimensions - such a result is now valid for the radial eigenvalues only and then only for $j=0,1$. They have shown the following theorem, see Theorem 2.3, \cite{FLS}. 
\begin{theorem}(Frank-Lenzmann-Silvestre, Theorem 2.3, \cite{FLS})
	\label{FLS:10} 
	Let $n\geq 1, s\in (0,1]$ and $W$ satisfies 
	\begin{itemize}
		\item $W=W(|x|)$ and $W$ is non-decreasing in $|x|$, 
		\item $W\in L^\infty(\rn)$, $W\in C^\ga, \ga>\max(0, 1-2s)$. That is 
		$$
		|W(x)-W(y)|\leq C |x-y|^\ga.
		$$
	\end{itemize}
	Then, assume that $H=(-\De)^s+W$ has at least two radial eigenvalues $E_0<E_1<\inf \si_{ess}(H)$. 
	
	Then, the corresponding eigenfunction $\Psi_1: \ch \Psi_1=E_1 \Psi_1$ has exactly one change of sign. That is, there exists $r_0\in (0, \infty)$, so that $\Psi_1(r)<0, r\in (0, r_0)$ and $\Psi_1(r)>0, r\in (r_0, \infty)$. 
\end{theorem}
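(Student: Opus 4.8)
The plan is to remove the nonlocality through the Caffarelli--Silvestre extension and then carry out a Courant-type nodal-domain count in the resulting local (degenerate) elliptic problem. Setting $V:=W-E_1$, the eigenvalue equation $(-\De)^s\Psi_1+V\Psi_1=0$ is equivalent to the extension problem for $U=U(x,y)$ on the half-space $\R^{n+1}_+=\rn\times(0,\infty)$: one has $\nabla\cdot(y^{1-2s}\nabla U)=0$ in $\R^{n+1}_+$ with $U(x,0)=\Psi_1(x)$ and $-c_s\lim_{y\to0^+}y^{1-2s}\partial_y U=-V(x)\Psi_1(x)$. The associated quadratic form
\beqn
\mathcal{Q}[U]=c_s\int_{\R^{n+1}_+}y^{1-2s}|\nabla U|^2\,dx\,dy+\int_{\rn}V(x)\,U(x,0)^2\,dx
\eeqn
is genuinely local in the bulk variable, which is exactly what the classical oscillation arguments need. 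Because $W$ is radial, $U$ is radial in $x$ and the analysis reduces to the $(r,y)$ quarter-plane carrying the $A_2$-weight $y^{1-2s}$ and the factor $r^{n-1}$. The lower bound is immediate: the ground state $\Psi_0$ is strictly positive (by the Perron--Frobenius argument already recorded above), and $\langle\Psi_0,\Psi_1\rangle=0$ forces $\Psi_1(r)$ to change sign at least once. Since $E_1<\inf\si_{ess}(H)$, $\Psi_1$ is a genuine $L^2$ eigenfunction and admits a decaying extension, so the entire difficulty lies in the upper bound: $\Psi_1$ changes sign \emph{at most} once.

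For the upper bound I would run the nodal-domain argument in the bulk rather than on the boundary. The second eigenvalue is characterized by the local min-max
\beqn
E_1=\min\{\mathcal{Q}[U]:\ \|U(\cdot,0)\|_{L^2}=1,\ \langle U(\cdot,0),\Psi_0\rangle=0\},
\eeqn
and the correct competitors are the restrictions of $U$ to its own nodal domains in $\R^{n+1}_+$: if $\Omega_1,\dots,\Omega_m$ are the connected components of $\{U\ne0\}$, then each $U\chi_{\Omega_j}$ lies in the weighted energy space (it is continuous across the nodal set, where $U$ vanishes), the cross terms in $\mathcal{Q}$ vanish by disjointness of supports, and integration by parts against the equation gives $\mathcal{Q}[U\chi_{\Omega_j}]=E_1\|U(\cdot,0)\chi_{\Omega_j}\|_{L^2}^2$. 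Were there three or more such domains, a dimension count produces a nontrivial combination orthogonal to $\Psi_0$ that also vanishes identically on one domain; this combination is admissible for the min-max and attains the value $E_1$, hence is itself a second eigenfunction whose trace vanishes on a boundary open set. Strong unique continuation for the $A_2$-weighted extension (equivalently for $(-\De)^s+W$) then forces it to vanish identically, a contradiction. Thus $U$ has at most two bulk nodal domains.

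The main obstacle is the bridge between the bulk and the boundary, namely the implication ``the trace $\Psi_1(r)$ has at least two sign changes $\Longrightarrow$ $U$ has at least three nodal domains in $\R^{n+1}_+$''. This is where the hypothesis $W\in C^\ga$ with $\ga>\max(0,1-2s)$ is used: it yields Schauder-type regularity of $U$ up to $\{y=0\}$ for the degenerate operator, so that the Neumann trace is classical and, at each interior zero $r_0$ of $\Psi_1$, a Hopf-type lemma guarantees that a nodal curve of $U$ genuinely emanates into the half-space rather than staying tangent to the boundary. One must then rule out that two such curves merge in the interior and collapse the count; this is precisely a no-pinching statement controlled by the strong maximum principle and unique continuation for the weighted equation. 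Once it is established that each boundary sign change contributes a distinct bulk nodal domain, the bound ``at most two bulk domains'' forces ``at most one sign change,'' which combined with the lower bound yields exactly one sign change and the asserted sign pattern. The delicate free-boundary/nodal-geometry analysis near $\{y=0\}$ --- keeping track of how nodal lines of the degenerate extension attach to the trace's zeros --- is the genuinely hard part and the technical heart of the Frank--Lenzmann--Silvestre argument.
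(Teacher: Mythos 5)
The first thing to note is that the paper does not prove this statement at all: Theorem \ref{FLS:10} is imported verbatim from Frank--Lenzmann--Silvestre (Theorem 2.3 of \cite{FLS}) and is used as a black box, so there is no in-paper proof to compare yours against. Measured against the actual argument in \cite{FLS} (and its one-dimensional precursor in \cite{FL}), your outline identifies the correct strategy: localize via the Caffarelli--Silvestre extension with weight $y^{1-2s}$, obtain the lower bound (at least one sign change) from orthogonality to the positive ground state, and obtain the upper bound by a Courant-type count of bulk nodal domains combined with unique continuation for the degenerate-elliptic extension. That is genuinely the skeleton of their proof.

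As a proof, however, your text is incomplete in exactly the place you yourself flag. The implication ``two sign changes of the trace imply at least three nodal domains of $U$ in the half-space'' is not a routine Hopf-lemma remark; it is the core of the FLS argument and requires their careful analysis of how nodal sets of the weighted extension attach to zeros of the trace, together with the no-pinching and connectivity control that you mention but do not supply. Two further points need attention even in the parts you do sketch: (i) when you build a linear combination of the truncations $U\chi_{\Omega_j}$ that vanishes identically on one nodal domain, you must also ensure that its boundary trace is not identically zero, otherwise it is not an admissible competitor for the min--max characterization of $E_1$, so the dimension count has to be run over nodal domains that actually reach $\{y=0\}$; and (ii) the endpoint $s=1$ included in the statement is not covered by the $y^{1-2s}$ extension and must be handled separately by classical Sturm--Liouville theory. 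In short: the roadmap is correct and matches the cited source, but the technical heart of the theorem is asserted rather than proved.
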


\subsection{The linearized problem for the solitary waves $\phi_\om$} 
\label{sec:2.1}
We now formally state the stability problem for the ground states of \eqref{10}. Namely, we take ansatz in the form 
$$
u(t,x)=e^{- i \om t} (\phi_\om(x)+ v(t,x)),
$$
and plug in the equation \eqref{a:10}. After ignoring all terms in the form $O(v^2)$ and taking a real and imaginary parts ( namely  $v=v_1+i v_2$), we arrive at the following linearized problem 
\begin{equation}
\label{a:60}
\left|
\begin{array}{l}
-\p_t v_2 +((-\De)^s +V(x)+\om) v_1 - p \phi^{p-1} v_1 = 0\\
\p_t v_1 +((-\De)^s +V(x)+\om) v_1 -  \phi^{p-1} v_2 = 0
\end{array}
\right.
\end{equation}
Introducing the linearized self-adjoint operators 
\begin{eqnarray*}
	\cl_+  &=&  (-\De)^s +V+\om  - p \phi^{p-1},\\
	\cl_-  &=&   (-\De)^s +V+\om  -   \phi^{p-1}
\end{eqnarray*}
and the assignments $\vec{v}(t,x)=\left(\begin{array}{c} v_1 \\ v_2 \end{array}\right)\to e^{\la t} \vec{v}(x)$, 
$\cl:= \left(\begin{array}{cc} \cl_+ &  0 \\ 0 & \cl_- \end{array}\right)$, 
$\cj=\left(\begin{array}{cc} 0 & -1  \\ 1 & 0 \end{array}\right)$ allow us to rewrite the eigenvalue problem \eqref{a:60} in the standard form 
\begin{equation}
\label{a:70}
\cj \cl \vec{v}=\la \vec{v}
\end{equation}

\section{Existence of the ground states}
\label{sec:3}
We give  the  variational construction of the ground states. 
\subsection{Variational construction}
\label{sec:3.1}
\begin{proposition}
\label{theo:105}
Let  $s \in (0, 1])$, $ n\geq 2$ and 
$1<p<1+\f{4s}{n}$. Then, the constrained minimization problem \eqref{100} has a solution $\phi$, which belongs to the energy space 
$H^s(\rn)\cap L^2(V(x) dx)\cap L^{p+1}(\rn)$. All solutions $\phi$ are necessarily (a translates of) bell-shaped functions, that is there exists 
$x_0\in \rn$, $a\in \rone$ and $\rho:[0, \infty)\to [0, \infty)$, with $\rho$ decreasing, so that $\phi(x)=a \rho(|x-x_0|)$. 

In addition,    there exists $\om=\om_\la>-\si_0(\ch)$, so that $\phi$ satisfies the Euler-Lagrange equation 
\begin{equation}
\label{110}
(-\De)^s  \phi+V(x)  \phi -  \phi^p+\om_\la \phi=0.
\end{equation} 
\end{proposition}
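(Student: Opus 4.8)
The plan is to establish existence of a minimizer for the constrained problem \eqref{100} via the direct method of the calculus of variations, then extract the bell-shapedness from symmetrization, and finally derive the Euler–Lagrange equation with a Lagrange multiplier. The first task is to verify that the energy functional $E$ is bounded below on the constraint set $\{u: \|u\|_{L^2}^2=\la\}$. Here the crucial input is the assumption $p<1+\f{4s}{n}$, the $L^2$-subcritical exponent. Using a Gagliardo–Nirenberg–Sobolev inequality adapted to the fractional setting — interpolating $\|u\|_{L^{p+1}}$ between $\|u\|_{L^2}$ and $\||\nabla|^s u\|_{L^2}$ — one controls the nonlinear term so that for $u$ on the constraint,
\begin{equation*}
\f{1}{p+1}\int_{\rn}|u|^{p+1}\,dx \leq C\la^{\theta}\||\nabla|^s u\|_{L^2}^{\sigma}
\end{equation*}
with $\sigma=\f{n(p-1)}{2s}<2$, so that the positive quadratic terms dominate and $E$ is bounded below (and coercive in $X_s$). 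This gives a well-defined finite infimum and shows minimizing sequences have bounded $\dot H^s$ norm and bounded $\int V|u|^2\,dx$.

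**Next I would** take a minimizing sequence $(u_k)$ and compactify it. This is where the trapping potential does its essential work: since $V(r)\to\infty$, the bound on $\int V|u_k|^2\,dx$ prevents mass from escaping to spatial infinity, so the embedding $X_s\hookrightarrow L^2(\rn)$ is compact (via a Kolmogorov–Riesz / Rellich-type criterion, exactly as invoked earlier for the discreteness of $\sigma(\ch)$). Thus, after passing to a subsequence, $u_k\rightharpoonup \phi$ weakly in $X_s$ and strongly in $L^2$, so the constraint $\|\phi\|_{L^2}^2=\la$ is preserved in the limit. Strong $L^2$ convergence plus the $\dot H^s$ and $L^2(V\,dx)$ bounds upgrade, by interpolation, to strong $L^{p+1}$ convergence, so the nonlinear term passes to the limit; weak lower semicontinuity of the two quadratic (convex) terms $\||\nabla|^s\cdot\|_{L^2}^2$ and $\int V|\cdot|^2\,dx$ then gives $E[\phi]\leq\liminf E[u_k]$, so $\phi$ is a minimizer.

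**For bell-shapedness**, I would replace $\phi$ by its symmetric decreasing rearrangement $\phi^*$. By Proposition \ref{prop:11} the kinetic term does not increase, $\||\nabla|^s\phi^*\|_{L^2}\leq\||\nabla|^s\phi\|_{L^2}$; by \eqref{r:20} applied to the nondecreasing $V$ the potential term does not increase; by \eqref{r:10} the $L^{p+1}$ term does not decrease; and $\|\phi^*\|_{L^2}=\|\phi\|_{L^2}$, so $\phi^*$ remains admissible with $E[\phi^*]\leq E[\phi]$. Since $\phi$ is a minimizer, all inequalities are equalities, and the equality case in Proposition \ref{prop:11} forces $\phi$ to be a translate of a radial decreasing profile $a\rho(|x-x_0|)$. (One may take $\phi\geq 0$ from the start by replacing $\phi$ with $|\phi|$, which does not increase $E$.) Finally, the Euler–Lagrange equation follows from the Lagrange multiplier theorem: $E'[\phi]=-\om_\la\, P'[\phi]$ in the distributional sense yields \eqref{110} with multiplier $\om_\la$; testing the equation against $\phi$ and using $\ch+\om_\la=\cl_-\geq 0$ together with positivity of $\phi$ gives the bound $\om_\la>-\si_0(\ch)$.

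**The main obstacle** I expect is the compactness step, specifically justifying that the trapping hypothesis yields genuine compactness of $X_s\hookrightarrow L^2$ in the fractional, polynomially-growing-potential generality considered here, rather than merely in the harmonic case $V(x)=|x|^2$. One must rule out vanishing and dichotomy of the minimizing sequence uniformly, and the confinement argument via $\int V|u_k|^2$ controls only the tails in $L^2$; combining this with the fractional Rellich embedding on bounded sets (which is the subtle fractional point) to get full strong $L^2$ convergence — and then bootstrapping to strong $L^{p+1}$ convergence so that the nonlinearity is weakly continuous along the sequence — is the delicate part. Everything else (lower semicontinuity, the rearrangement chain, the multiplier) is routine once compactness is in hand.
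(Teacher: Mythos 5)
Your overall strategy coincides with the paper's: boundedness from below via the fractional Gagliardo--Nirenberg inequality with exponent $\f{n(p-1)}{2s}<2$, compactness of the minimizing sequence from the trapping bound on $\int V|u_k|^2$, lower semicontinuity, symmetrization for bell-shapedness, and a Lagrange multiplier for \eqref{110}. The only notable difference in the main body is that the paper symmetrizes the minimizing sequence first and then exploits bell-shapedness to obtain the pointwise decay $|u_k(x)|\leq C V(|x|)^{-1/2}|x|^{-n/2}$, from which $L^{p+1}$-compactness follows by a tail estimate; you instead obtain strong $L^2$ convergence from the compact embedding and upgrade to $L^{p+1}$ by interpolation against the bounded $H^s$ norm (legitimate, since $p+1$ lies strictly below the critical exponent $\f{2n}{n-2s}$). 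Either route works, and up to this point your argument is sound.

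The genuine gap is in the final step, the strict inequality $\om_\la>-\si_0(\ch)$. You propose to test the Euler--Lagrange equation against $\phi$ and use ``$\ch+\om_\la=\cl_-\geq 0$.'' First, $\cl_-=\ch+\om_\la-\phi^{p-1}$, not $\ch+\om_\la$; second, nonnegativity of $\cl_-$ is not available at this stage --- in the paper it is established only later (Proposition \ref{prop:37}), using the second-order condition at the constrained minimizer, so invoking it here is circular. Moreover, pairing the equation with $\phi$ alone yields only $\om_\la>-\dpr{\ch\phi}{\phi}/\la$, and since $\dpr{\ch\phi}{\phi}/\la\geq\si_0(\ch)$ this inequality points the wrong way and does not imply $\om_\la>-\si_0(\ch)$. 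The paper's argument is both simpler and correct: pair \eqref{110} with the positive ground-state eigenfunction $\Psi_0$ of $\ch$ and use self-adjointness to obtain $(\om_\la+\si_0(\ch))\dpr{\Psi_0}{\phi}=\dpr{\Psi_0}{\phi^p}>0$; since $\Psi_0>0$ and $\phi>0$, both pairings are positive and the strict bound follows. You should replace your multiplier-sign argument with this one.
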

\begin{proof}
First, we show that the minimization problem \eqref{100} is bounded from below, that is 
$$
\inf\limits_{\int_{\rn} |u(x)|^2 dx=\la} \ce[u]\geq C_\la>-\infty.
$$
Indeed, by Sobolev embedding, we have 
$$
\|u\|_{L^{p+1}(\rn)}^{p+1}\leq C_p \|u\|_{\dot{H}^{n(\f{1}{2}-\f{1}{p+1})}}^{p+1}\leq C_p 
\| |\nabla^s u\|^{\f{n(p-1)}{2 s}} \|u\|^{p+1-\f{n(p-1)}{2s}}=
C_{p,\la} \|\nabla u\|^{\f{n(p-1)}{2 s}}.
$$
Noting that $\f{n(p-1)}{2s}<2$ (since $p<1+\f{4 s}{n}$), we conclude that 
\begin{eqnarray*}
\ce[u] &\geq &  \f{1}{2} [\||\nabla|^s u\|^2+\int V(x) u^2(x) dx] - \f{C_{p,\la}}{p+1} \|\nabla u\|^{\f{n(p-1)}{2}}\geq \\
&\geq &  \f{1}{4} [\||\nabla|^s u\|^2+\int V(x) u^2(x) dx ] -  B_{p,\la}>-\infty. 
\end{eqnarray*}
In particular, for the elements of the constrained set, that is  $\|u\|_{L^2}^2=\la$, there exists a constant $C_\la$, so that 
\begin{equation}
\label{130} 
\||\nabla|^s u\|^2+\int V(x) u^2(x) dx\leq   M_\la. 
\end{equation}
We now apply the theory of decreasing rearrangements for functions on $\rn$. Indeed, by the fractional Polya-Szego inequality, \eqref{PS}, we have $\||\nabla|^s u\|^2\geq \| |\nabla|^s u^*\|_{L^2}^2$. In addition,   by \eqref{r:20}, 
 $$
 \int_{\rn}  V(x)  |u(x)|^2 dx \geq \int_{\rn}  V(x)  |u^*(x)|^2 dx,
 $$
 while $\|u\|_{L^2}=\|u^*\|_{L^2}, \|u\|_{L^{p+1}}=\|u^*\|_{L^{p+1}}$. All in all, it follows that 
$
\ce[u]\geq \ce[u^*],
$
 while the constraint $\int |u^*(x)|^2 dx=\la$ still holds. Moreover, in the Polya-Szeg\"o inequality,  equality is only achieved, if $u(x)=\rho(|x-x_0|)$ for some decreasing function $\rho:\rone_+\to \rone_+$. Thus, we draw the conclusion that the minimization problem \eqref{100} has only bell-shaped solutions (if any!), modulo translations. So, we can concentrate from now on, on the bell-shaped functions only. 
 
 Take a minimizing sequence (of bell-shaped functions) $u_k\in H^s(\rn)\cap L^2(V(x)  dx)$. Denoting 
 \begin{equation}
 \label{200}
 m(\la):= \inf\limits_{\int_{\rn} |u(x)|^2 dx=\la} \ce[u],
 \end{equation}
we have that $\lim_k \ce[u_k]=m(\la)$, with $\int |u_k(x)|^2 dx=\la$. From \eqref{130}, we have that 
$\sup_k \| |\nabla|^s u_k\|<M_\la$. 
We claim that $\{u_k\}$ is a compact sequence in $L^{p+1}$. Indeed, it is bounded in $L^{p+1}$, 
from the Sobolev embedding $H^s\hookrightarrow  L^{p+1}$.  By the Kolmogorov-Riesz compactness  criterium, compactness in $L^{p+1}$ follows from the estimate 
$$
\int V(x) u_k^2(x) dx\leq   M_\la,
$$
since $\lim_{x\to \infty} V(x)=\infty$. 
But  since $u_k$ is bell-shaped and $V$ is non-decreasing, 
$$
M_\la\geq \sup_k  \int_{\rn} V(x) |u_k(x)|^2  dx \geq V(R) \int_{|x|<R} |u_k(x)|^2 dx\geq c_n V(R) R^{n} |u_k(z_0)|^2
$$
for every integer $k$,  every $R>0$ and $z_0: |z_0|=R$. It follows that $|u_k(x)|\leq \f{M_\la}{c_n} V(R)^{-1/2}  |R|^{-n/2}$. Thus, 
$$
  \int_{|x|>R} |u_k(x)|^{p+1} dx\leq \left(\f{M_\la}{c_n \sqrt{V(R)}}\right)^{p+1}   \int_{|x|>R} |x|^{-\f{n}{2}(p+1)} dx \leq c_{\la, n,p}  
  R^{-\f{n(p-1)}{2}}. 
$$
It follows that $\{u_k\}$ is compact in $L^{p+1}(\rn)$. Similarly, $\{u_k\}$ is compact in $L^2(\rn)$, since in addition to being bounded in $H^s(\rn)$
$$
 \int_{|x|>R} |u_k(x)|^2 dx\leq   \f{1}{V(R)} \int_{|x|>R} V(x) |u_k(x)|^2 dx \leq \f{M_\la}{V(R)}.  
$$
Thus, we select a subsequence $u_{k_j}\to \phi$ in $L^{p+1}\cap L^2$, while simultaneously converging weakly in 
$H^s(\rn)\cap L^2(V(x)dx)$. By the lower semi-continuity of norms with respect to weak convergence 
$$
m(\la)=\liminf_j \ce[u_{k_j}]\geq \ce[\phi],
$$
 while $\int \phi^2(x) dx=\lim_j \int u_{k_j}^2(x) dx=\la$. We now see that it must be that 
 $$
 \liminf_j \ce[u_{k_j}]=\lim_j \ce[u_{k_j}]= \ce[\phi], 
 $$
 otherwise one gets a contradiction with the definition of $m(\la)$. Thus, $\phi$ is a solution to \eqref{100} and $m(\la)=\ce[\phi]$. 
 It now remains to derive the Euler-Lagrange equation for $\phi$.  Set for any $\eps\in \rone$ and a test function $h$, 
 \begin{equation}
 \label{302} 
g(\eps) = \ce\left( \sqrt{\la}  \f{\phi+\eps h}{\|\phi+\eps h \|} \right)\geq g(0)=\ce(\phi). 
\end{equation}
We now need to expand $g(\eps)$ in powers of $\eps$, for small $\eps$. To this end, observe that for any  $q$, we have 
 \begin{eqnarray*}
 \|\phi+\eps h \|^q &=&  (\la+2\eps\dpr{\phi}{h}+\eps^2\|h\|^2)^{q/2} =   
 \la^{q/2}\left(1+\eps \f{q\dpr{\phi}{h}}{\la}   +O(\eps^2)  \right)=\\
 &=& \la^{q/2}+\eps q \la^{q/2-1} \dpr{\phi}{h}+O(\eps^2).
\end{eqnarray*}
 Thus\footnote{For the purposes of the derivation of the Euler-Lagrange  equation, the operator $(-\De)^s$ applied on $\phi$ should be understood  in a distributional sense, since {\it a priori}, we only know that $\phi\in H^s(\rn)$. Eventually, we have that $\phi \in H^{2s}(\rn)$, so this will not be an issue.}
 \begin{eqnarray*}
  & &   \f{\la}{2  \|\phi+\eps h \|^2}  \int_{\rn} [| |\nabla|^s  (\phi+\eps h)|^2+ V(x)|\phi+\eps h|^2] dx = \\
  &=& \f{1}{2} \left[ (\| |\nabla|^s \phi\|^2 + \int V(x) \phi^2(x) dx)+2 \eps \dpr{(-\De)^s
  	 \phi+ V(x) \phi}{h}+  
  O(\eps^2) \right] \left[1-2\f{\eps}{\la} \dpr{\phi}{h}+O(\eps^2)\right] = \\
  &=&  \f{1}{2} (\||\nabla|^s \phi\|^2  + \int V(x) \phi^2(x)dx)+\eps\dpr{(-\De)^s \phi+V(x)  \phi-\f{  \||\nabla|^s \phi\|^2 + \int V(x) \phi^2(x)}{\la} \phi}{h}+O(\eps^2).
  \end{eqnarray*}
  In addition, 
  \begin{eqnarray*}
  & & \f{\la^{\f{p+1}{2}}}{(p+1) \|\phi+\eps h \|^{p+1} }\int_{\rn} |\phi(x)+\eps h(x)|^{p+1} dx   = \\
  &=&\f{1}{p+1}[\int\phi^{p+1}(x) dx + \eps(p+1) \dpr{\phi^p}{h}+O(\eps^2)][1-(p+1) \f{\eps}{\la} \dpr{\phi}{h}+O(\eps^2)]= \\
  &=&  \f{1}{p+1} \int\phi^{p+1} dx+\eps[\dpr{\phi^p}{h} -\f{\dpr{\phi}{h}}{\la}  \int\phi^{p+1} dx]+O(\eps^2).
  \end{eqnarray*}
  Putting the last two formulas together 
  \begin{eqnarray*}
    \ce\left( \sqrt{\la}  \f{\phi+\eps h}{\|\phi+\eps h \|} \right)=\ce(\phi)+
    \eps[ \dpr{(-\De)^s \phi+V(x) \phi- \phi^p + \om \phi}{h}]+O(\eps^2),
  \end{eqnarray*}
  where
  $$
  \om=-\f{\||\nabla|^s \phi\|^2  +\int V(x) \phi^2(x) dx  -\int \phi^{p+1}(x) dx}{\la}
  $$
  But $\phi$ is a minimizer, implying that $g'(0)=0$, which amounts to the fact that $\phi$ is a distributional  solution of the following PDE, 
  $$
  (-\De)^s \phi+V(x) \phi- \phi^p  +\om \phi=0
  $$
  Finally, let us show that $\om >-\si_0(\ch)$. To do this, just test the Euler-Lagrange equation with the bell-shaped eigenfunction $\Psi_0: \ch \Psi_0=\si_0(\ch) \Psi_0$. We obtain 
  $$
  \dpr{\Psi_0}{\phi^p}=\dpr{ \Psi_0}{(\ch+\om) \phi}=\dpr{ (\ch+\om)\Psi_0}{ \phi}=(\om+\si_0(\ch)) \dpr{\psi_0}{\phi}.
  $$
  It follows that 
  $$
  \om+\si_0(\ch)=\f{\dpr{\Psi_0}{\phi^p}}{\dpr{\Psi_0}{\phi}}>0. 
  $$
With that, the proof of Proposition \ref{theo:105} is complete.

\end{proof}
Next, we shall need to establish an additional {\it a posteriori} smoothness result for $\phi$. . 
\begin{proposition}
	\label{prop:apost} 
	The normalized waves constructed in Proposition \ref{theo:105} are elements of \\ 
	$H^{2s}\cap L^2(V^2(x) dx)$. In particular, $\phi\in D(\ch)$, so the Euler-Lagrange equation is satisfied in the sense of $L^2$ functions. In addition, 
	$\phi\in C^1(\rn)$. 
\end{proposition}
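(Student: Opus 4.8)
The plan is to bootstrap the regularity of $\phi$ starting from the variational information $\phi\in H^s(\rn)\cap L^2(V(x)\,dx)\cap L^{p+1}(\rn)$ and the distributional Euler--Lagrange equation \eqref{110}, exploiting throughout the positivity of $\phi$ (it is bell-shaped, so after a sign normalization $\phi\geq 0$) and of the free resolvent kernel. First I would fix $N$ large enough that $N>\om$ and $(-\De)^s+N>0$, and rewrite \eqref{110} as $((-\De)^s+N)\phi=\phi^p+(N-\om)\phi-V\phi$. Since $V\geq 0$ and $\phi\geq 0$, the term $V\phi\geq 0$, and applying the positivity-preserving operator $((-\De)^s+N)^{-1}$, whose kernel is the nonnegative bell-shaped $G_N$ of \eqref{eq:g}, yields the pointwise a.e.\ domination
$$
0\leq \phi\leq G_N*\bigl(\phi^p+(N-\om)\phi\bigr).
$$

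The core of the argument is then an $L^q$ bootstrap on this inequality. Recall that $G_N\in L^r(\rn)$ for $1-\tfrac1r<\tfrac{2s}{n}$, so convolution with $G_N$ improves the reciprocal Lebesgue exponent by almost $\tfrac{2s}{n}$ via Young's inequality. Starting from $\phi\in L^2$ we have $\phi^p\in L^{2/p}$, hence $\phi\in L^{q_1}$ with $\tfrac1{q_1}$ arbitrarily close to $\tfrac{p}{2}-\tfrac{2s}{n}$. Iterating and writing $a_k:=1/q_k$, the exponents obey $a_{k+1}=p\,a_k-\tfrac{2s}{n}+\ve$. The condition $1<p<1+\tfrac{4s}{n}$ is precisely equivalent to $a_0=\tfrac12<\tfrac{2s}{n(p-1)}$, i.e.\ to $a_0$ lying below the fixed point of the recursion; hence $\{a_k\}$ is strictly decreasing and crosses $0$ after finitely many steps, which gives $\phi\in L^\infty(\rn)$. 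The linear contribution $(N-\om)\phi$ gains integrability faster at every stage and never dictates the rate, and in the exceptional cases $2s\geq n$ one simply has $G_N\in L^2$ and $\phi\leq G_N*F\in L^\infty$ in a single step. \textbf{This $L^\infty$ bootstrap is the main obstacle}, and it is exactly where subcriticality $p<1+\tfrac{4s}{n}$ is used.

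Once $\phi\in L^\infty\cap L^2$, it follows that $\phi^p\in L^2$, so \eqref{110} reads $\ch\phi=\phi^p-\om\phi\in L^2$. To promote this to $\phi\in D(\ch)$ I would set $\psi:=(\ch+N)^{-1}\bigl(\phi^p+(N-\om)\phi\bigr)$, which lies in $D(\ch)=H^{2s}(\rn)\cap L^2(V^2(x)\,dx)$ by the resolvent bound \eqref{eq:g12}. Then $(\ch+N)(\phi-\psi)=0$ holds distributionally; since $\phi$ is in the form domain and $\psi\in D(\ch)$, testing against $\phi-\psi$ and using $V\geq 0$ gives $N\|\phi-\psi\|^2\leq Q_\ch(\phi-\psi,\phi-\psi)+N\|\phi-\psi\|^2=0$, forcing $\phi=\psi$. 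Thus $\phi\in H^{2s}(\rn)\cap L^2(V^2(x)\,dx)$ and the Euler--Lagrange equation holds in the $L^2$ sense.

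Finally, for $\phi\in C^1(\rn)$ I would run a local Schauder bootstrap for $(-\De)^s\phi=h$ with $h:=\phi^p-V\phi-\om\phi$. This is a purely local question: on any ball $B_R$ the potential $V\in C^1(\overline{B_R})$ is bounded and Lipschitz, and $t\mapsto t^p$ is Lipschitz on the bounded range of $\phi$, so $h$ is locally as Hölder-regular as $\phi$ up to any exponent below $1$. Interior regularity estimates for the fractional Laplacian (with the nonlocal tail controlled by the global bound $\|\phi\|_{L^\infty(\rn)}$ together with the decay of the bell-shaped $\phi$) improve $\phi\in C^{0,\beta}_{loc}$ to $\phi\in C^{0,\beta+2s}_{loc}$; iterating finitely many times and letting $\beta\to 1^-$ pushes the exponent past $1$, yielding $\phi\in C^{1,\al}_{loc}\subset C^1(\rn)$. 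The delicate point here is reconciling the nonlocality of $(-\De)^s$ with the unbounded growth of $V$, which is handled by localizing the Schauder estimates on balls while retaining the global $L^\infty$ control needed to bound the tails.
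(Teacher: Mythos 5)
Your overall strategy is genuinely different from the paper's: you go for an $L^\infty$ bound first, via positivity of the free resolvent kernel $G_N$ and a Young-inequality iteration on Lebesgue exponents, then recover $D(\ch)$ membership, and finish with a local Schauder iteration for $C^1$. The paper instead first proves the resolvent identity $\phi=(\ch+\om_\la+N)^{-1}[\phi^p+N\phi]$ using the \emph{full} resolvent, bootstraps in the Sobolev scale $H^{\al_k}$ (via \eqref{eq:g15} and Kato--Ponce) up to $H^{2s}\cap L^2(V^2dx)$, and only \emph{then} uses kernel positivity to get $L^\infty$, concluding with a cutoff-plus-Mikhlin $W^{\al,p}$ iteration. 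Your route is attractive in principle, but it has a genuine gap at its very first step.

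The gap is the asserted pointwise domination $0\leq\phi\leq G_N*\bigl(\phi^p+(N-\om)\phi\bigr)$. To obtain it you rearrange the Euler--Lagrange equation as $((-\De)^s+N)\phi=\phi^p+(N-\om)\phi-V\phi$ and ``apply'' $((-\De)^s+N)^{-1}$. But at this stage the only information on $V\phi$ is $\int V\phi^2<\infty$, i.e.\ $V^{1/2}\phi\in L^2$; the product $V\phi$ is not known to lie in $L^2$, $L^1$, $H^{-2s}$, or any space on which the free resolvent acts. Since $G_N$ decays only like $|x|^{-n-2s}$ while $V$ may grow like $|x|^N$ with $N$ large, the convolution $G_N*(V\phi)$ need not even be defined by absolute convergence, and the distributional identity cannot simply be tested against $G_N*g$ for $g\in C_0^\infty$ because such test functions are not admissible for arbitrary polynomial growth of $V$. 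This is exactly the difficulty the paper's first page of the appendix is designed to circumvent: it works with $(\ch+\om_\la+N)^{-1}$, which keeps $V$ on the left, proves $\phi=\tilde\phi$ by duality against the dense set $(\ch+\om_\la+N)D(\ch)$, and deploys the positivity/domination trick only \emph{after} $V\phi\in L^2$ has been secured. Your argument could likely be repaired (e.g.\ by a monotone truncation $V_M=\min(V,M)$ and a limiting argument, or by first running the paper's full-resolvent step and then invoking pointwise domination of the resolvent kernels), but as written the foundation of the $L^\infty$ bootstrap is unjustified. A second, more minor, slip: starting the iteration from $a_0=\tfrac12$ requires $\phi^p\in L^{2/p}$ with $2/p\geq1$, which fails when $p>2$; you should start from $\phi\in L^{p+1}$, i.e.\ $a_0=\tfrac1{p+1}$, which still lies below the fixed point $\tfrac{2s}{n(p-1)}$ under the standing assumption $p<1+\tfrac{4s}{n}$. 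The remaining two stages (the identification $\phi=\psi\in D(\ch)$ by testing $(\ch+N)(\phi-\psi)=0$ against $\phi-\psi$, and the local H\"older iteration to $C^1$ with tails controlled by $\|\phi\|_{L^\infty}$) are sound.
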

{\bf Note:} One can establish stronger regularity results, by imposing  stronger  regularity on $V$. 

The somewhat technical proof of Proposition \ref{prop:apost} is presented in  the Appendix. 
We now  establish some additional spectral properties of the operators $\cl_\pm$. 
\subsection{Spectral properties of  $\cl_\pm$} 
\begin{proposition}
	\label{prop:37} 
	The operator $\cl_+$ has  exactly one negative eigenvalue and in fact $\cl_+|_{\{\phi\}^\perp}\geq 0$. In addition, $\phi\perp Ker[\cl_+]$. 
	
	On the other hand, $\cl_-\geq 0$, while $\cl_-[\phi]=0$. Finally,  there exists $\de>0$, so that 	$\cl_-|_{\{\phi\}^\perp}\geq \de$. In particular, $Ker[\cl_-]=span[\phi]$. 
\end{proposition}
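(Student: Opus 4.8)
The plan is to treat $\cl_-$ and $\cl_+$ separately, exploiting that $\phi$ is simultaneously a positive bell-shaped solution of the profile equation and a constrained minimizer. For $\cl_-$, I would first note that the Euler--Lagrange equation \eqref{110} reads precisely $\cl_-\phi=0$, so $\phi\in Ker[\cl_-]$. The potential $V+\om-\phi^{p-1}$ is still non-decreasing (since $\phi$ is decreasing and $p>1$, so $-\phi^{p-1}$ is increasing) and confining (using $\phi\in L^\infty$ from Proposition \ref{prop:apost}), so $\cl_-$ has the same structure as $\ch$: discrete spectrum tending to $+\infty$, with a simple bottom eigenvalue whose eigenfunction is strictly positive and bell-shaped, by the rearrangement/Perron--Frobenius argument of Section \ref{sec:2}. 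A positive $\phi$ cannot be orthogonal to that positive ground state, so $\phi$ must be the ground state itself; hence $\si_0(\cl_-)=0$, giving $\cl_-\ge 0$ and $Ker[\cl_-]=span[\phi]$, while the spectral gap $\cl_-|_{\{\phi\}^\perp}\ge \de$ with $\de:=\si_1(\cl_-)>0$ follows from discreteness and simplicity of the bottom eigenvalue.

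For $\cl_+$, subtracting $(p-1)\phi^{p-1}\phi$ from the identity $\cl_-\phi=0$ gives $\cl_+\phi=-(p-1)\phi^p$, so $\dpr{\cl_+\phi}{\phi}=-(p-1)\int_{\rn}\phi^{p+1}\,dx<0$; thus $\cl_+$ has at least one negative eigenvalue. For the matching upper bound I would invoke the second-order necessary condition for the constrained minimizer: expanding $g(\eps)=\ce(\sqrt{\la}\,(\phi+\eps h)/\|\phi+\eps h\|)$ to second order at $\eps=0$ --- the first-order part already being computed in the proof of Proposition \ref{theo:105} --- yields $\dpr{\cl_+ h}{h}\ge 0$ for every real $h$ with $\dpr{\phi}{h}=0$, that is $\cl_+|_{\{\phi\}^\perp}\ge 0$. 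By the min--max principle, a two-dimensional subspace on which $\cl_+<0$ would meet $\{\phi\}^\perp$ nontrivially, contradicting this inequality; hence $\cl_+$ has at most one, and therefore exactly one, negative eigenvalue.

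It then remains to show $\phi\perp Ker[\cl_+]$. Let $\psi_0>0$ be the bell-shaped ground state of $\cl_+$ with eigenvalue $-\mu<0$ (positivity again by the Section \ref{sec:2} argument, as $V+\om-p\phi^{p-1}$ is non-decreasing), and suppose some $\psi\in Ker[\cl_+]$ had $\dpr{\phi}{\psi}\ne 0$. Since $\phi,\psi_0>0$ force $\dpr{\phi}{\psi_0}>0$, I can pick $a=-\dpr{\phi}{\psi}/\dpr{\phi}{\psi_0}\ne 0$ so that $v:=a\psi_0+\psi\in\{\phi\}^\perp$. As eigenfunctions for distinct eigenvalues, $\psi_0\perp\psi$, whence $\dpr{\cl_+ v}{v}=-\mu a^2\|\psi_0\|^2<0$, contradicting $\cl_+|_{\{\phi\}^\perp}\ge 0$. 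Thus $\dpr{\phi}{\psi}=0$ for all $\psi\in Ker[\cl_+]$, as claimed.

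The hard part will be the rigorous justification of $\cl_+|_{\{\phi\}^\perp}\ge 0$: one must verify that $g$ is genuinely twice differentiable along the constraint and carefully track the $O(\eps^2)$ contribution produced by the normalizing factor $\|\phi+\eps h\|^{-1}$, which is where the a posteriori regularity $\phi\in H^{2s}\cap L^\infty$ from Proposition \ref{prop:apost} is essential, so that all pairings involving $(-\De)^s\phi$ and $\phi^{p-1}$ are legitimate $L^2$ inner products. The Perron--Frobenius positivity of the ground states of $\cl_\pm$ for the fractional Laplacian is the only other delicate point, but it is already supplied by the rearrangement machinery of Section \ref{sec:2}.
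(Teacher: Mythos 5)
Your proposal is correct, and the $\cl_+$ half coincides with the paper's argument: the second variation $g''(0)\ge 0$ of the constrained functional gives $\cl_+|_{\{\phi\}^\perp}\ge 0$, the identity $\cl_+\phi=-(p-1)\phi^p$ gives $\dpr{\cl_+\phi}{\phi}<0$, and min--max pins the negative count at exactly one. Where you diverge is in the remaining two claims. For $\phi\perp Ker[\cl_+]$ the paper avoids the ground state $\psi_0$ of $\cl_+$ altogether: given $\psi\in Ker[\cl_+]$ it tests $\cl_+$ on $\psi-\|\phi\|^{-2}\dpr{\psi}{\phi}\phi\in\{\phi\}^\perp$ and, since $\cl_+\psi=0$ kills the cross terms, is left with $0\le\|\phi\|^{-4}\dpr{\psi}{\phi}^2\dpr{\cl_+\phi}{\phi}$, forcing $\dpr{\psi}{\phi}=0$; this needs only $\dpr{\cl_+\phi}{\phi}<0$ and no Perron--Frobenius input. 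For $\cl_-$ the paper likewise stays elementary: it writes, for $h\perp\phi$, $\dpr{\cl_- h}{h}=\dpr{\cl_+ h}{h}+(p-1)\int\phi^{p-1}h^2\ge(p-1)\int\phi^{p-1}h^2>0$, which simultaneously rules out a second kernel element and (with discreteness of the spectrum) yields the gap $\de$. Your route instead applies the rearrangement/Perron--Frobenius machinery of Section 2 directly to $\cl_-$ and $\cl_+$, using that $V+\om-\phi^{p-1}$ and $V+\om-p\phi^{p-1}$ are non-decreasing confining potentials and that the positive eigenfunction $\phi$ of $\cl_-$ at $0$ cannot be orthogonal to a positive ground state. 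This is valid, and it has the merit of identifying $\si_0(\cl_-)=0$ as the simple bottom eigenvalue outright; the cost is that it leans on more structure (bell-shapedness and the regularity $\phi\in L^\infty\cap C^1$ from Proposition \ref{prop:apost}, plus the equality analysis in the fractional Polya--Szeg\"o inequality), whereas the paper's algebraic comparison of $\cl_-$ with $\cl_+$ on $\{\phi\}^\perp$ needs only $\phi>0$ and the already-established second-variation inequality. You correctly flag the rigorous justification of the second-order expansion of $g$ as the main technical burden; that is indeed where the paper spends its effort as well.
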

{\bf Note:} Due to the fact that $\si(\cl_\pm)$ is all discrete eigenvalues, without finite point of accumulation, it follows that there exists $\de>0$, so that 
\begin{equation}
\label{320} 
\cl_+|_{\{\phi, Ker[\cl_+]\}^\perp}\geq \de>0.
\end{equation}
\begin{proof}
	For the proof of $\cl_+|_{\{\phi\}^\perp}\geq 0$, take a test function $h\perp \phi, \|h\|_{L^2}=1$. Similar to the arguments in the derivation of the Euler-Lagrange equation, we will use the fact that the function $g$, defined in \eqref{302}, satisfies $g''(0)\geq 0$, due to the fact that $\phi$ is a constrained minimum. We have the expansions  
	\begin{eqnarray*}
	\|\phi+\eps h\|_{L^2}^q= (\la+\eps^2)^{q/2}=\la^{q/2} + \f{q\la^{q/2-1}}{2} \eps^2+O(\eps^4), 
	\end{eqnarray*}
	and 
\begin{eqnarray*}
	& &   \f{\la}{2  \|\phi+\eps h \|^2}  \int_{\rn} [| |\nabla|^s  (\phi+\eps h)|^2+ V(x)|\phi+\eps h|^2] dx   =
	 \\
	&=& \f{1}{2} \left[ (\| |\nabla|^s \phi\|^2 + \int V(x) \phi^2(x) dx)+2 \eps \dpr{(-\De)^s
		\phi+ V(x) \phi}{h}  \right] \left[1-\f{\eps^2}{\la} \right] + \\
	&+& \f{\eps^2}{2} \left[ \||\nabla|^s h\|^2+ \int V(x) h^2(x) dx \right]+O(\eps^3) = \f{1}{2}  \left(\||\nabla|^s \phi\|^2  + \int V(x) \phi^2(x) dx\right)+\eps \dpr{\phi^p}{h}+	\\
	&+& \f{\eps^2}{2}\left[\||\nabla|^s h\|^2+ \int V(x) h^2(x) dx-\f{\||\nabla|^s \phi\|^2  + \int V(x) \phi^2(x) dx}{\la}\right]+O(\eps^3),
\end{eqnarray*}
where we have used $(-\De)^s
\phi+ V(x) \phi=\phi^p-\om \phi$ and $\phi\perp h$. 
Similarly, 
 \begin{eqnarray*}
 	& & \f{\la^{\f{p+1}{2}}}{(p+1) \|\phi+\eps h \|^{p+1} }\int_{\rn} |\phi(x)+\eps h(x)|^{p+1} dx   = \\
 	&=&\f{1}{p+1}[\|\phi\|_{L^{p+1}}^{p+1} + \eps(p+1) \dpr{\phi^p}{h}+ \eps^2 \f{(p+1) p}{2} \dpr{\phi^{p-1}h}{h}
 	][1-\f{p+1}{2\la} \eps^2]+O(\eps^3)= \\
 	&=&  \f{1}{p+1} \|\phi\|_{L^{p+1}}^{p+1} +\eps\dpr{\phi^p}{h}  + 
 	\f{\eps^2}{2}\left[p \dpr{\phi^{p-1}h}{h} - \f{\|\phi\|_{L^{p+1}}^{p+1}}{\la}\right]+ O(\eps^3).
 \end{eqnarray*}
 Putting it together, we obtain, 
 $$
 g(\eps)=g(0)+\f{\eps^2}{2}\dpr{\cl_+ h}{h} + O(\eps^3),
 $$
where we have used the representation $$\om=-\f{\||\nabla|^s \phi\|^2  + \int V(x) \phi^2(x) dx-\|\phi\|_{L^{p+1}}^{p+1}}{\la}.$$

Thus, $\dpr{\cl_+ h}{h}=g''(0) \geq 0$, so $\cl_+|_{\{\phi\}^\perp}\geq 0$. It follows that $\cl_+$ has at most one negative eigenvalue. On the other hand,  $\cl_+[\phi]=-(p-1)\phi^p$,  which allows us to compute 
$$
\dpr{\cl_+ \phi}{\phi}=-(p-1) \int \phi^{p+1}(x) dx<0.
$$
From this, $\cl_+$ has indeed a negative eigenvalue and since we have established that it was at most one, it is exactly one, $n(\cl_+)=1$. 

Let us now show that $\phi\perp Ker[\cl_+]$. Note that,  under certain conditions on $V$, we will in fact show the non-degeneracy statement, i.e. $Ker[\cl_+]=\{0\}$, which of course would imply that $\phi\perp Ker[\cl_+]$. On the other hand, this is easy to see without any additional assumptions.

 Indeed, take $\psi\in Ker[\cl_+]$. We have that $\psi-\|\phi\|^{-2} \dpr{\psi}{\phi}\phi \perp \phi$, whence 
$$
0\leq \dpr{\cl_+[\psi-\|\phi\|^{-2} \dpr{\psi}{\phi}\phi]}{\psi-\|\phi\|^{-2} \dpr{\psi}{\phi}\phi}=\|\phi\|^{-4}\dpr{\psi}{\phi}^2 \dpr{\cl_+ \phi}{\phi}.
$$
Since $\dpr{\cl_+ \phi}{\phi}<0$, it follows that $\dpr{\psi}{\phi}=0$, otherwise we reach a contradiction.

Regarding the statement for $\cl_-$, it is clear, by inspection that $L_-[\phi]=0$. Taking arbitrary $h: h\perp \phi$, we have 
\begin{equation}
\label{300}
\dpr{\cl_- h}{h}=\dpr{\cl_+ h}{h}+(p-1) \int \phi^{p-1}(x) h^2(x) dx\geq (p-1) \int \phi^{p-1}(x) h^2(x) dx. 
\end{equation}
From this last inequality, it is clear that there is $\de>0$, $L_-|_{\{\phi\}^\perp}\geq \de$. Indeed, if there is another element in $Ker[\cl_-]$, we can take it $h_0\perp \phi, h_0\neq 0:  \cl_-[h_0]=0$. By \eqref{300}, this would imply that $\int \phi^{p-1}(x) h_0^2(x) dx=0$, which is impossible. So, $Ker[\cl_-]=span[\phi]$. 
\end{proof}

\section{Non-degeneracy and orbital stability of the normalized waves}
\label{sec:4}
 We now aim at establishing the non-degeneracy of the waves $\phi$, that is the Schr\"odinger operator 
 $$
 \cl_+= (-\De)^s+V + \om - p \phi^{p-1},
 $$ 
 has trivial kernel, $Ker[\cl_+]=\{0\}$. The main tool, as in the recent works \cite{FL}, \cite{FLS} is the Sturm oscillation theorem for the  second eigenfunction, Theorem \ref{FLS:10}. There are some technical problems associated with that - in our case the potential $W:=V + \om - p \phi^{p-1}$ is not a bounded function, though it is still non-decreasing and of sufficient smoothness\footnote{here, recall that due to Proposition \ref{prop:apost}, $\phi\in C^1(\rn)$, and so $\phi\in C^1(0, \infty)$ as a function of the radial variable}.  Thus, we need to rely on an approximation argument, and the result that we obtain is somewhat weaker, compared to Theorem \ref{FLS:10}. Nevertheless, it will serve our purposes well.
 \subsection{Sturm oscillation estimate for  the second eigenfunction of a fractional Schr\"odinger operator with increasing unbounded potential}
 \begin{proposition}
 	\label{prop:sturm} 
 	Let $W: \lim_{r\to \infty} W(r)=\infty$ be a radial potential, which is non-decreasing and in the class  	$C^{\ga}_{loc.}((0, \infty))$, $\ga>\max(0, 1-2s)$. That is, for each $N$, there is $C_N$, so that for all $0<r<\rho<N$,  
 	$$
 	|W(\rho)-W(r)|\leq C_N |\rho-r|^\ga.
 	$$
Then, the  smallest eigenvalue of $H_W:=(-\De)^s+W$, $E_0$  is simple, with a bell-shaped eigenfunction. Denote the next  radial eigenvalues of $H_W$   as  $E_0<E_1$. Then, $E_1$ has an eigenfunction with exactly one change of sign. 
 \end{proposition}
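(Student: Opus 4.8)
The plan is to reduce everything to the bounded-potential Sturm oscillation theorem of Frank--Lenzmann--Silvestre (Theorem \ref{FLS:10}) by a truncation-and-limit argument, carried out entirely inside the radial sector $L^2_{rad}$, since $E_0<E_1$ are radial eigenvalues and $H_W$ together with all of its approximants act invariantly there. The ground-state part of the statement I would dispose of first, exactly as for $\ch$ in Section \ref{sec:2}: the variational characterization $E_0=\inf_{\|u\|=1}[\||\nabla|^s u\|^2+\int W|u|^2]$, combined with the fractional Polya--Szeg\"o inequality \eqref{PS} and the rearrangement bound \eqref{r:20}, forces a bell-shaped minimizer, and since a positive eigenfunction cannot be orthogonal to a second one, $E_0$ is simple with a strictly positive eigenfunction $\Psi_0$. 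Here $\lim_{r\to\infty}W(r)=\infty$ guarantees, via the Kolmogorov--Riesz compactness argument already used above, that the spectrum is purely discrete.

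Next I would truncate. Set $W_M(r):=\min\{W(r),W(M)\}$, so that $W_M$ is bounded, non-decreasing, equals $W$ on $[0,M]$ and is constant beyond $M$; it therefore inherits a global H\"older bound and satisfies the hypotheses of Theorem \ref{FLS:10} (the H\"older constant may blow up with $M$, but this is harmless since we apply the theorem at fixed $M$). Let $H_M:=(-\De)^s+W_M$. Because $W_M-W(M)\to 0$ at infinity, $\si_{ess}(H_M)=[W(M),\infty)$. Since $W_M\nearrow W$ monotonically, the monotone convergence theorem for quadratic forms gives, for each radial min-max level, $E_j^M\uparrow E_j$; in particular $E_0^M,E_1^M\le E_1<\infty$, while $W(M)\to\infty$, so for all large $M$ one has $E_0^M<E_1^M<W(M)=\inf\si_{ess}(H_M)$. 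Thus $H_M$ has at least two radial eigenvalues below its essential spectrum, and Theorem \ref{FLS:10} applies: the second radial eigenfunction $\Psi_1^M$ has exactly one sign change, say $\Psi_1^M<0$ on $(0,r_0^M)$ and $\Psi_1^M>0$ on $(r_0^M,\infty)$.

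I would then pass to the limit. Normalizing $\|\Psi_1^M\|_{L^2}=1$, the identity $\||\nabla|^s\Psi_1^M\|^2+\int W_M|\Psi_1^M|^2=E_1^M\le E_1$ together with $W_M\ge W(0)$ yields a uniform $H^s$ bound, and for $R<M$ the monotonicity of $W_M$ gives $\int_{|x|>R}|\Psi_1^M|^2\lesssim (W(R)-W(0))^{-1}$ uniformly in $M$, i.e.\ uniform tightness. By Kolmogorov--Riesz a subsequence converges in $L^2_{rad}$ and weakly in $H^s$ to some $\Psi_1$ with $\|\Psi_1\|=1$; passing to the limit in the weak formulation of the eigenvalue equation, using $E_1^M\to E_1$ and $W_M\to W$, shows $\ch\Psi_1=E_1\Psi_1$. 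The same argument produces $\Psi_0^M\to\Psi_0>0$, and the orthogonality $\Psi_1^M\perp\Psi_0^M$ survives the limit as $\Psi_1\perp\Psi_0$.

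Finally I would recover the sign structure. Since $\Psi_0>0$ and $\Psi_1\perp\Psi_0$ with $\Psi_1\neq 0$, the limit $\Psi_1$ cannot be one-signed; this immediately excludes $r_0^M\to 0$ and $r_0^M\to\infty$, either of which would force $\Psi_1\ge 0$ or $\Psi_1\le 0$. Hence, along a subsequence, $r_0^M\to r_0\in(0,\infty)$. To transport the nodal structure I would upgrade the $L^2$ convergence to local uniform convergence on compact subsets of $(0,\infty)$: bootstrapping the eigenvalue equation through the resolvent bound \eqref{eq:g15} and the radial elliptic regularity underlying Proposition \ref{prop:apost} gives uniform $C^1_{loc}$ bounds, whence Arzel\`a--Ascoli yields $\Psi_1^M\to\Psi_1$ locally uniformly. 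Then $\Psi_1\le 0$ on $(0,r_0)$ and $\Psi_1\ge 0$ on $(r_0,\infty)$, so $\Psi_1$ changes sign at most once; together with the forced sign change, exactly once. I expect this last step --- producing convergence strong enough to carry the nodal information to the limit while keeping the node $r_0^M$ trapped in a compact subinterval of $(0,\infty)$ --- to be the main obstacle. The orthogonality to the positive ground state $\Psi_0$ is the decisive device that prevents node escape, and the a posteriori regularity of Proposition \ref{prop:apost} supplies exactly the compactness needed to pass the sign information through the limit.
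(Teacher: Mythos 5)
Your proposal follows the paper's proof almost step for step: the same truncation $W_N=\min\{W,W(N)\}$, the same application of Theorem \ref{FLS:10} to the truncated operators, the same compactness-and-variational-characterization argument to pass the eigenpairs to the limit, and the same use of orthogonality to the positive ground state to rule out the node escaping to $0$ or to $\infty$. The one place where you genuinely diverge is the step you yourself flag as the main obstacle: transporting the nodal structure of $\Psi_{1,N}$ to the limit. You propose to upgrade the convergence to local uniform convergence via uniform $C^1_{\mathrm{loc}}$ bounds and Arzel\`a--Ascoli; the paper avoids this entirely. It observes (in a footnote) that the eigenfunctions are a priori only in $H^s$ and need not even be continuous, and instead \emph{defines} the sign of $\psi_1$ on an interval distributionally: $\psi_1\geq 0$ on $I$ means $\dpr{\psi_1}{\chi}\geq 0$ for every non-negative $\chi\in C_0^\infty(I)$. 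With that convention, the $L^2$ convergence already in hand suffices --- for $\chi\in C^\infty_0((0,r_0))$ one gets $\dpr{\psi_1}{\chi}=\lim_k\dpr{\Psi_{1,N_k}}{\chi}\geq 0$ directly, and on $(r_0,\infty)$ the contribution of the shrinking annulus $r_0<|x|\leq r_{N_k}$ tends to zero. This closes the argument with no elliptic regularity at all, whereas your route would require uniform-in-$N$ interior estimates for eigenfunctions of $(-\De)^s+W_N$ with a merely $C^\ga_{\mathrm{loc}}$ potential --- plausible since $2s+\ga>1$, but a nontrivial piece of work that Proposition \ref{prop:apost} (which concerns the nonlinear profile $\phi$, not these eigenfunctions) does not supply off the shelf. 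If you adopt the test-function device, your write-up becomes a complete proof; as it stands, the final step is an acknowledged gap that the paper shows how to bypass rather than fill.
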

 \begin{proof}
 	Define 
 	$$
 	W_N:=\left\{ \begin{array}{cc} 
 	W(r) & 0<r<N \\
 	W(N) & r \geq N. 
 	\end{array}
 	\right.
 	$$
 	Thus, $W_N\in  L^\infty\cap C^{0, \ga}$, so it satisfies the assumptions of Theorem \ref{FLS:10}. Since $\lim_{r\to \infty} W(r)=W(N)$, we have by Weyl's theorem that 
 	$\si_{a.c.}(H_N)=[W(N), \infty)$. Note that since $\lim_N W(N)=\infty$, by the variational characterization of the eigenvalues, there will be plenty of finite multiplicity eigenvalues below $W(N)$. We assume henceforth that $N$  is large enough, so that there are at least two eigenvalues below $W(N)$. 
 	
 	 In addition, by the Perron-Frobenius arguments presented earlier, each $H_N$ has a simple eigenvalue at the bottom of its spectrum $E_{0,N}$, with bell-shaped eigenfunctions, which we denote by $\Psi_{0,N}: \|\Psi_{0,N}\|_{L^2}=1$, that is  $H_N  \Psi_{0,N}=E_{0,N} \Psi_{0,N}$. Note that since $W_N\leq W$, we have that $E_{0,N}$ is an increasing sequence and $E_{0,N}\leq E_0$. Moreover, we have 
 	 $$
 	 \| |\nabla|^s \Psi_{0,N}\|^2+ \int W_N(x) \Psi_{0,N}^2(x) dx = E_{0,N}\leq E_0.
 	 $$
 	It follows that for each $M\geq N$, $\|\Psi_{0,M}\|_{\dot{H}^s}\leq E_0$ and 
 	$W(N) \int_{|r|>N} \Psi_{0,M}^2(x) dx \leq E_0$. This implies that $\{\Psi_{0,N}\}_{N=1}^\infty$ is a compact sequence in $L^2(\rn)$, so it has a limit point $\psi_0:=\lim_k \Psi_{0,N_k}$, which we can in addition  take to be a weak limit in $H^s$ of the same sequence.  Thus, $\|\psi_0\|_{L^2}=1$, 
 	$\| |\nabla|^s \psi_{0}\|\leq \liminf_k \| |\nabla|^s \Psi_{0,N_k}\|$. Finally, for each $R>0$, we have 
 	$$
 	\int_{|x|<R}  W(x) \psi_0^2(x) dx=\lim_k 	\int_{|x|<R}  W_{N_k}(x) \Psi_{0,N_k}^2(x) dx\leq 
 	\limsup_N \int W_N(x) \Psi_{0,N}^2(x) dx.
 	$$
 By Fatou's,  $	\int  W(x) \psi_0^2(x) dx 	\leq 
 \limsup_N \int W_N(x) \Psi_{0,N}^2(x) dx$, whence it follows that 
 	$$
 	\| |\nabla|^s \psi_{0}\|^2+	\int  W(x) \psi_0^2(x) dx \leq  \limsup_N [\| |\nabla|^s \Psi_{0,N}\|^2+\int W_N(x) \Psi_{0,N}^2(x) dx]\leq E_0. 
 	$$
 	It follows that $\psi_0$ is an eigenfunction for $H$, corresponding to the eigenvalue $E_0$, and we have equalities above, which means that $\lim_k \|\Psi_{0,N_k}-\psi_0\|_{H^s}=0$. In fact, by running a simple contradiction  argument similar to the one above, we see that in fact 
 	$\lim_N \|\Psi_{0,N}-\psi_0\|_{H^s}=0$. Clearly, $\psi_0$ is a bell-shaped function as well. 
 	
 	Regarding the eigenvalue $E_1$, we run a similar argument to establish that the eigenfunctions of $H_N$ corresponding to $E_{1,N}$, say $\Psi_{1,N}$,   converge to an eigenfunction corresponding to the   eigenvalue $E_1$. Since Theorem \ref{FLS:10} is applicable to $H_N$, we will be able to conclude that {\it there is an eigenfunction $\psi_1$ of $H_W$, which has exactly one change of sign}. Here are the details. 
 	
 	We start  again with the observation that $E_{1,N}\leq E_1$, since $W_N\leq W$. Further, $\Psi_{1,N}: \|\Psi_{1,N}\|_{L^2}=1$, is so that $\Psi_{1,N}\perp \Psi_{0,N}$ and  
 	$$
 	\| |\nabla|^s \Psi_{1,N}\|^2+ \int W_N(x) \Psi_{1,N}^2(x) dx = E_{1,N}\leq E_1.
 	$$
 	By the same reasoning,  $\Psi_{1,N}$ is a compact sequence in $L^2$, let us denote an accumulation point by $\psi_1: \|\psi_1\|=1$, $\lim_{k\to \infty} \|\psi_1 - \Psi_{1,N_k}\|_{L^2}=0$. Again, we can without loss of generality assume that $\psi_1$ is a weak limit of $\{\Psi_{1, N_k}\}_{k=1}^\infty$  in $H^s$, whence 
 	$\| |\nabla|^s \psi_{1}\|\leq \liminf_k \| |\nabla|^s \Psi_{1,N_k}\|$. Similar to the argument above 
 	$$
 		\| |\nabla|^s \psi_{1}\|^2+	\int  W(x) \psi_1^2(x) dx \leq  \limsup_N [\| |\nabla|^s \Psi_{1,N}\|^2+\int W_N(x) \Psi_{1,N}^2(x) dx]\leq E_1. 
 	$$
 	Note that this implies  $\lim_{N} \|\Psi_{1,N} -\Psi_1\|_{H^s(\rn)}=0$. 
 	Finally, 
 	$$
 	\dpr{\psi_1}{\psi_0}=\lim_N 	\dpr{\Psi_{1,N}}{\Psi_{0,N}}=0.
 	$$
 	Thus,  $\psi_1$ is an eigenfunction for $H_W$, corresponding to the eigenvalue $E_1$. 
 	
 	Now, by Theorem \ref{FLS:10}, $\Psi_{1,N}$ are radial functions, which have exactly one sign change, say $r_N\in (0, \infty)$. Without loss of generality (by replacing $\Psi_{1,N}$ to $-\Psi_{1,N}$ if necessary), assume that \\ $\Psi_{1,N}|_{(0,r_N)}>0$, while $\Psi_{1,N}|_{(r_N, \infty)}<0$. We will show that $\psi_1$ also has exactly one sign change\footnote{ Note that here, the {\it a priori} information is only $\psi_1, \Psi_{1,n}\in H^s(\rn)$, so our functions are not even known to be continuous, unless $s>\f{n}{2}$. On the other hand, the property $\psi$ is positive on an interval $(r_0, \infty)$ is easily tested against a positive  test function. That is $\psi>0$ on an interval $I$, if for every non-negative $C^\infty_0(I)$ function, we have $\dpr{\psi}{\chi}>0$. }. 
 	
 	Indeed, it will suffice to show that  $\{r_N\}_{N=1}^\infty$ has a bounded subsequence, converging to $r_0\in (0, \infty)$.  If that is the case, pick $r_{N_k}\to r_0$ and without loss of generality, assume $r_{N_k}\geq r_0$ (otherwise pick a further subsequence of this property, the case $r_{N_k}\leq r_0$ is symmetric). In such a case, we clearly have that for any $\chi\in C^\infty_0((0, r_0))$, $\chi\geq 0$, we have $\dpr{\psi_1}{\chi}=\lim_k \dpr{\Psi_{1,N_k}}{\chi}\geq 0$. For $\chi\in C^\infty_0((r_0, \infty))$, $\chi\geq 0$, 
 	we have 
 	$$
 	\dpr{\psi_1}{\chi}=\lim_k \dpr{\Psi_{1,N_k}}{\chi}=
 	\lim_k[\int_{|x|\geq r_{N_k}} \Psi_{1,N_k} \chi(x) dx+ \int_{r_0<|x|\leq r_{N_k}} \Psi_{1,N_k} \chi(x) dx]\leq 0, 
 	$$
 since  the second term converges to zero, while the first one is non-positive. 
 
 Thus, it remains to show that $r_N$ has a bounded subsequence, converging to $r_0\in (0, \infty)$. Indeed, otherwise, we have to refute two alternatives - one is that $r_N\to \infty$, while the other is $r_N\to 0$. Assuming $\lim_N r_N=\infty$, we have for any $\chi\in C^\infty_0$, 
 $$
 \dpr{\psi_1}{\chi}= \lim_N \dpr{\Psi_{1,N}}{\chi} \geq 0. 
 $$
 	It follows that $\psi_1\geq 0$, which is a contradiction, since $\dpr{\psi_1}{\psi_0}=0$ (as  eigenfunctions  of $H_W$), while $\psi_1\geq 0$ and $\psi_0$ is bell-shaped. Similarly, if $r_N\to 0$, we conclude 
 	 $$
 	 \dpr{\psi_1}{\chi}= \lim_N \dpr{\Psi_{1,N}}{\chi} \leq 0. 
 	 $$
 	whence $\psi_1\leq 0$, again in contradiction with  $\dpr{\psi_1}{\psi_0}=0$ and $\psi_0$ - bell-shaped. 
 \end{proof}
 \subsection{Non-degeneracy of the wave $\phi$} 
With the results of Proposition \ref{prop:sturm} in hand, we are ready to show the non-degeneracy of $\cl_+$. We know that $\cl_+$ has one simple negative eigenvalue, which is simple, according to Proposition \ref{prop:37}.  

Next, recall that for fractional Schr\"odinger operators like $\cl_+$, there is the decomposition in spherical harmonics 
$$
\cl_+=\cl_{+,0}\oplus \cl_{+,\geq 1}.
$$
 The claim about the non-degeneracy  would thus follow from the two propositions below. 
 
 First, we show that $\cl_{+,0}$, the restriction of $\cl_+$ to the radial subspace,  has exactly one negative eigenvalue and no eigenvalues at zero. 
 \begin{proposition}
 	\label{prop:64} $\si_1(\cl_{+,0})>0$. That is, the second smallest eigenvalue is strictly positive. 	
 \end{proposition}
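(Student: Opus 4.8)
The plan is to realize $\cl_{+,0}$ as the radial part of a fractional Schr\"odinger operator $H_W=(-\De)^s+W$ with potential $W:=V+\om_\la-p\phi^{p-1}$, and then to apply the Sturm oscillation estimate of Proposition \ref{prop:sturm}. First I would verify that $W$ satisfies the hypotheses there. It is radial since $V$ and $\phi$ are; it is non-decreasing because $V$ is increasing while $\phi$ is bell-shaped, so $\phi^{p-1}$ is decreasing and $-p\phi^{p-1}$ is increasing; it lies in $C^\ga_{loc}((0,\infty))$ with $\ga>\max(0,1-2s)$ because $V\in C^1$ and $\phi\in C^1(\rn)$ by Proposition \ref{prop:apost}; and $\lim_{r\to\infty}W(r)=+\infty$ since $V(r)\to\infty$ while $\phi(r)\to 0$. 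Note that $W$ is unbounded, which is exactly the regime Proposition \ref{prop:sturm} was designed to handle.

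With this in hand, writing the radial eigenvalues of $\cl_+$ as $E_0<E_1\le\dots$, Proposition \ref{prop:sturm} tells us that $E_0=\si_0(\cl_{+,0})$ is simple with a bell-shaped eigenfunction, while $E_1=\si_1(\cl_{+,0})$ admits an eigenfunction $\psi_1$ with \emph{exactly one} change of sign, say at $r_0\in(0,\infty)$. Next I would pin down the signs. Since $\cl_{+,0}<\cl_{+,l}$ for $l\ge 1$, the bottom of $\si(\cl_+)$ lies in the radial sector and equals $E_0$; by Proposition \ref{prop:37} this bottom is the unique negative eigenvalue of $\cl_+$, so $E_0<0$ is the \emph{only} negative eigenvalue of $\cl_{+,0}$ as well, giving $\si_1(\cl_{+,0})=E_1\ge 0$. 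It therefore remains only to exclude $E_1=0$.

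Suppose for contradiction that $E_1=0$, so that $\psi_1\in Ker[\cl_+]$ is radial with a single sign change at $r_0$. Proposition \ref{prop:37} gives $\dpr{\phi}{\psi_1}=0$. A second orthogonality relation comes from $\cl_+\phi=-(p-1)\phi^p$ (a direct consequence of $\cl_-\phi=0$) together with self-adjointness: $0=\dpr{\phi}{\cl_+\psi_1}=\dpr{\cl_+\phi}{\psi_1}=-(p-1)\dpr{\phi^p}{\psi_1}$, hence $\dpr{\phi^p}{\psi_1}=0$. Setting $f:=\phi\psi_1$ and integrating against $r^{n-1}\,dr$, these relations read $\int f=0$ and $\int\phi^{p-1}f=0$. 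Since $\phi>0$, $f$ inherits the single sign change of $\psi_1$ at $r_0$, so $\int_0^{r_0}f=-\int_{r_0}^\infty f=:m$. Because $\phi^{p-1}$ is decreasing with $\phi^{p-1}\ge\phi^{p-1}(r_0)$ on $(0,r_0)$ and \emph{strictly} below $\phi^{p-1}(r_0)$ on a set of positive measure in $(r_0,\infty)$ (as $\phi\to 0$ there), one gets $\int_0^{r_0}\phi^{p-1}f\ge\phi^{p-1}(r_0)\,m$ and $\int_{r_0}^\infty\phi^{p-1}f>-\phi^{p-1}(r_0)\,m$, so $\int\phi^{p-1}f>0$ unless $m=0$. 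This contradicts $\int\phi^{p-1}f=0$, forcing $\psi_1\equiv 0$, and hence $E_1=\si_1(\cl_{+,0})>0$.

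The main obstacle I anticipate is twofold. Conceptually, the crux is the final monotonicity step, where the two orthogonality relations are played against the single sign change of $\psi_1$ and the strict monotonicity of $\phi^{p-1}$; securing the \emph{strict} inequality requires knowing that $f$ is genuinely supported on both sides of $r_0$ and that $\phi^{p-1}$ is strictly, not merely weakly, decreasing on the outer region $(r_0,\infty)$, which holds because $\phi$ is a positive ground state decaying to zero. Technically, the more delicate point is confirming the local H\"older regularity and monotonicity of the unbounded potential $W$ needed to invoke Proposition \ref{prop:sturm}, since $\phi^{p-1}$ is only as smooth as the $C^1$ bound of Proposition \ref{prop:apost} supplies; here the approximation built into Proposition \ref{prop:sturm} absorbs the unboundedness, so the remaining burden is purely the verification of the hypotheses on $W$.
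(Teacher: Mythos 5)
Your argument is correct and is essentially the paper's own proof: invoke Proposition \ref{prop:sturm} to get a one-sign-change eigenfunction at $E_1$, use the two orthogonality relations $\dpr{\phi}{\psi_1}=0$ and $\dpr{\phi^p}{\psi_1}=0$ (the latter from $\cl_+\phi=-(p-1)\phi^p$ and self-adjointness), and contradict them via the monotonicity of $\phi^{p-1}$. Your final integral inequality is exactly the paper's test-function computation with $\Phi:=\phi^{p-1}(r_0)\,\phi-\phi^p=\phi\bigl(\phi^{p-1}(r_0)-\phi^{p-1}\bigr)$, which has the same single sign change as $\psi_1$ yet is orthogonal to it; your explicit verification of the hypotheses of Proposition \ref{prop:sturm} for $W=V+\om_\la-p\phi^{p-1}$ is a welcome (if minor) addition.
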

 For $\cl_+$ restricted to higher harmonics, we show  strict positivity. 
\begin{proposition}
	\label{prop:63} 
	There exists $\de>0$, so that the operator $\cl_{+,\geq 1}\geq \de>0$. That is, the operator $\cl_{+,\geq 1}$ is strictly positive. 
\end{proposition}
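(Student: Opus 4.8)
The plan is to reduce everything to the first angular sector and then exploit that the trap $V$ is \emph{strictly} increasing to rule out a zero mode, by a Perron--Frobenius pairing argument. First I would use the monotonicity $\ch_0<\ch_1<\ch_2<\dots$ recorded in the spherical-harmonics decomposition: since $\cl_{+,l}=\ch_l+\om-p\phi^{p-1}$, we have $\cl_{+,l}\geq \cl_{+,1}$ for every $l\geq 1$, so it suffices to bound $\cl_{+,1}$ from below by a positive constant, whence $\cl_{+,\geq 1}\geq\cl_{+,1}\geq\delta$. Moreover $\cl_{+,1}\geq 0$ is immediate from Proposition \ref{prop:37}: every $\psi\in\cx_1$ is orthogonal to the radial function $\phi$, so $\dpr{\cl_+\psi}{\psi}\geq 0$ because $\cl_+|_{\{\phi\}^\perp}\geq 0$. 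Since the spectrum of $\cl_{+,1}$ is discrete (the compactness/$V\to\infty$ argument of Section \ref{sec:2}), the strict bound $\cl_{+,1}\geq\delta>0$ is equivalent to the statement that $\cl_{+,1}$ has no zero eigenvalue.

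The key ingredient is an exact identity obtained by differentiating the profile equation \eqref{110}. Applying $\p_{x_j}$ to $(-\De)^s\phi+V\phi+\om\phi-\phi^p=0$ gives $\cl_+[\p_{x_j}\phi]=-(\p_{x_j}V)\phi$. Since $\phi=\phi(r)$ and $V=V(r)$ are radial, both sides lie in $\cx_1$ (they are multiples of $x_j/r$), and reading off the radial profiles yields, on $L^2_{rad}$,
\begin{equation*}
\cl_{+,1}[-\phi']=V'\,\phi .
\end{equation*}
Here $-\phi'\geq 0$ and $-\phi'\not\equiv 0$ because $\phi$ is bell-shaped (Proposition \ref{theo:105}), while $V'>0$ and $\phi>0$ by Definition \ref{de:trap}, so the right-hand side $V'\phi$ is strictly positive. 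Thus the positive profile $-\phi'$ is mapped by $\cl_{+,1}$ to a strictly positive function; this is exactly the mechanism by which the trap, which destroys translation invariance and so prevents $\p_{x_j}\phi$ from being a kernel element, forces strict positivity.

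Now I would argue by contradiction. If $\cl_{+,1}$ had a zero eigenvalue, then since $\cl_{+,1}\geq 0$ this eigenvalue is the bottom of the spectrum, and by the Perron--Frobenius property of $\ch_1=A_1^{s}+V$ with $A_1=-\p_{rr}-\f{n-1}{r}\p_r+\f{n-1}{r^2}$ (subordination applied to the positivity-preserving semigroup of the reduced Bessel operator $A_1$, the first-sector analogue of the ground-state arguments used earlier) the corresponding eigenfunction $\psi$ can be taken strictly positive. Pairing the eigenvalue relation with $-\phi'$, using self-adjointness and the identity above,
\begin{equation*}
0=\dpr{\cl_{+,1}\psi}{-\phi'}=\dpr{\psi}{\cl_{+,1}(-\phi')}=\dpr{\psi}{V'\phi}>0,
\end{equation*}
since $\psi>0$, $V'>0$, $\phi>0$. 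This contradiction shows $Ker[\cl_{+,1}]=\{0\}$, hence $\cl_{+,1}\geq\delta>0$, and by monotonicity $\cl_{+,\geq 1}\geq\delta$. For the sectors $l\geq 2$ one may alternatively note directly that $\cl_{+,l}=\cl_{+,1}+(\ch_l-\ch_1)$ with $\ch_l-\ch_1$ strictly positive, so no additional work is needed there.

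The main obstacle I anticipate is purely technical: making the identity $\cl_{+,1}(-\phi')=V'\phi$ and the subsequent self-adjoint pairing rigorous for fractional $s\in(0,1)$. Proposition \ref{prop:apost} only guarantees $\phi\in H^{2s}\cap C^1$, whereas the argument wants $\p_{x_j}\phi$ in the operator domain of $\ch_1$ (morally $\phi\in H^{2s+1}$) together with enough decay of $\p_{x_j}\phi$ and $V'\phi$ to justify differentiating the nonlocal equation and pairing against the eigenfunction. I would close this gap either by first upgrading the regularity and decay of $\phi$, bootstrapping as in the Appendix and using the extra smoothness of $V$ flagged in the Note after Proposition \ref{prop:apost}, or by carrying out the entire computation at the level of quadratic forms on a dense set of radial test profiles and passing to the limit.
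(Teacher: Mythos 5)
Your proposal is correct and follows essentially the same route as the paper: reduce to the first harmonic sector via the monotonicity $\cl_{+,l}\geq\cl_{+,1}$, use the Perron--Frobenius/positivity-improving property of the reduced operator (Lemma C.4 of \cite{FLS}) to take the putative zero eigenfunction positive, differentiate the Euler--Lagrange equation to get $\cl_+(\p_{x_1}\phi)=-V'(r)\f{x_1}{r}\phi$, and derive a sign contradiction by pairing. Even the technical caveat you raise about justifying the differentiation and the self-adjoint pairing at the available regularity $\phi\in H^{2s}\cap C^1$ mirrors the point the paper glosses over.
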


\subsubsection{Proof of Proposition \ref{prop:64}} This is just an application of Proposition \ref{prop:sturm}. Indeed, we already know, that there is a negative eigenvalue $E_0$ of $\cl_+$ and hence of $\cl_{+,0}$, which is supported by a bell-shaped eigenfunction. The next eigenvalue $E_1$ cannot be negative, as this will imply that $n(\cl_+)\geq 2$, while we know, that $n(\cl_+)=1$. So, we have to only refute the possibility $E_1=0$. 

Assume for a contradiction $E_1=0$. By Proposition \ref{prop:sturm}, there must be an eigenfunction, $\Psi_1: \cl_{+,0} \Psi_1=0$, so that $\Psi_1$ has exactly one change of sign. Say $\Psi_0(r)<0, r\in (0,r_0)$, while $\Psi_0(r)>0, r\in (r_0, \infty)$. 

On the other hand, we have already checked that $\phi\perp Ker[\cl_+]$. In addition, a direct calculation yields $\cl_{+,0} \phi= -(p-1) \phi^p$, so $\phi^p\perp Ker[\cl_{+,0}]$.  We can construct a linear combination of the two functions, namely 
$$
\Phi:=c_0\phi - \phi^p=\phi(c_0-\phi^{p-1}), c_0:=\phi^{p-1}(r_0), 
$$
which has the property $\Phi(r)<0, r\in (0, r_0)$, $\Phi(r)>0, r\in (r_0, \infty)$, due to the fact that $\phi$ is bell-shaped. On the other hand, $\Phi\perp Ker[\cl_{+,0}]$, so in particular $\dpr{\Phi}{\Psi_1}=0$. But finally, $\Phi \Psi_1\geq 0$ and $\Phi>0$. This provides a contradiction, which finishes the proof of Proposition   \ref{prop:64}.

\subsubsection{Proof of Proposition \ref{prop:63}}
For the Proposition \ref{prop:63}, we start with the   observation that $\cl_{+,\geq 1}\geq 0$, due to the fact that $\cl_+: n(\cl_+)=1$ and the negative eigenvalue has been already accounted for in the radial subspace. Thus, we need to show that zero  is not an eigenvalue for $\cl_{+,\geq 1}$. 

Suppose for a contradiction that zero is an eigenvalue for $\cl_{+,\geq 1}$. We claim that zero then must be an eigenvalue for $\cl_{+,1}$. Assume that this is not the case, then zero is an eigenvalue for $\cl_{+,\geq 2}$, say $\cl_{+,\geq 2} \Phi=0$, where $\Phi=\phi Y_{\geq 2}, Y_{\geq 2}\in \cx_{\geq 2}$. Recalling that $\cl_{+, \geq 2}>\cl_{+, 1}$, it follows that 
$$
\dpr{\cl_{+, 1} \phi}{\phi}<\dpr{\cl_{+, \geq 2} \phi}{\phi}=0, 
$$
whence $\cl_{+,1}$ will have  a negative eigenvalue. In particular, 
 $n(\cl_+)\geq n(\cl_{+,0})+ n(\cl_{+,1})\geq 2$, which is a contradiction. 
 
 Thus, $\cl_{+,1}$ has an eigenvalue at zero, so this  must be clearly the bottom of the spectrum, otherwise again $n(\cl_+)\geq 2$. In addition, its eigenfunctions are be in the form $\Psi_1=\psi_1 Y_1, Y_1\in \cx_1$, so $\Psi_1\in \{\psi_1(r) \f{x_j}{r}, j=1, \ldots, n\}$, so take  $\Psi_1=\psi_1(r) \f{x_1}{r}$.  According to  Lemma C.4, \cite{FLS},  $(-\De_l)^s, s\in (0,1)$ is positivity improving (see also formulas $(C.19)$ and $(C.20)$) and as a consequence 
 $$
 \|(-\De_l)^{s/2} u\|_{L^2}\geq \|(-\De_l)^{s/2} |u| \|_{L^2},
 $$
whence we can conclude that the  radial component $\psi_1$ of $\Psi_1$   is a positive function\footnote{In fact, we can conclude that $\psi_1$ is both positive and decreasing in $(0, \infty)$}, $\psi_1>0$. 

We will show that this leads to a contradiction as well. Namely, take $\p_{x_1}$  in the Euler-Lagrange equation. We obtain the relation 
$$
\cl_{+} (\p_{x_1} \phi)=(-\De)^s \p_{x_1} \phi+ V \p_{x_1} \phi + \om \p_{x_1} \phi- p \phi^{p-1} \p_{x_1} \phi= -\f{\p V}{\p x_1}  \phi=-V'(r) \f{x_1}{r}\phi. 
$$
Taking dot product with $\Psi_1$ yields 
$$
0=\dpr{\p_{x_1} \phi}{\cl_{+} \Psi_1}=\dpr{\cl_{+} (\p_{x_1} \phi)}{\Psi_1}=-\int_0^\infty V'(r) \phi(r) \psi_1(r)  x_1^2 r^{n-3} dr<0, 
$$
since $V'>0$ and all the other integrands are non-negative. This is a contradiction, so Proposition \ref{prop:63} is established as well.

\subsection{Orbital stability} 
Before we set up the problem, let us mention that for this part of it, we assume global well-posedness and conservation of energy per Definition \ref{defi:global}.

 We would also like to change variables in a way that reduces matters a bit. Namely, using the ansatz $u\to e^{- i \om t} u$, we reduce the equation \eqref{a:10} to 
\begin{equation}
\label{a:12} 
i u_t +(-\De)^s u+V(x) u +\om u-|u|^{p-1} u=0, (t,x)\in \rone_+\times \rn,
\end{equation}
which in its current form has the time independent solution $u(t,x)=\phi(x)$. So, orbital stability for the solution $e^{- i \om t} \phi$ for \eqref{a:10} is equivalent to orbital stability for the static solution $\phi$ for \eqref{a:12}. 

That is, we are trying to show that for every $\eps>0$, there exists $\de=\de_\eps$, so that whenever $\|u_0-\phi\|_{H^s(\rn)}<\de$, then the solution of \eqref{a:12} with initial data $u_0$ satisfies 
$$
\sup_{0<t<\infty} \inf_{\theta\in \rone} \|u(t, \cdot)- e^{i \theta} \phi\|_{H^s(\rn)}<\eps.
$$
We argue by contradiction. 
Specifically, assume that there is $\eps_0>0$ and a sequence of initial data, $u_n: \lim_n \|u_n-\phi\|_{H^s(\rn)}=0$, while 
\begin{equation}
\label{410} 
\sup_{0<t<\infty} \inf_{\theta\in \rone} \|u_n(t, \cdot)- e^{i \theta} \phi\|_{H^s(\rn)}\geq \eps_0, 
\end{equation} 
Note the conservation of {\it total energy} for solutions of \eqref{a:12}, namely 
$$
\ce[u]:=\f{1}{2}\left( \int_{\rn}| |\nabla|^s u(t,x)|^2 dx +   \int_{\rn} (V(x)+\om) |u(t,x)|^2 dx\right) - \f{1}{p+1} \int_{\rn}  |u(t,x)|^{p+1} dx,
$$
and in addition $P[u]=\int_{\rn}  |u(t,x)|^2 dx$ is conserved as well. This are our assumptions in Definition \ref{defi:global}! 

Clearly, the Euler-Lagrange equation, satisfied by $\phi$ is equivalent to 
$
\ce'[\phi]=0, 
$
where $\ce'$ is the Gateaux derivative of the functional $\ce$.  
Introduce 
$$
\eps_n:=|\ce[u_n(t)]-\ce[\phi]|+|P[u_n(t)]-P[\phi]|.
$$
Note that by the conservation laws, $\eps_n$ is conserved and hence $\lim_n \eps_n=0$, since 
$\eps_n\leq C \|u_n-\phi\|_{H^s}$. 
Next, for all $\eps>0$, define $t_n=\sup \{\tau>0: \sup_{0<t<\tau} \|u_n(t)-  \phi\|_{H^s}<\eps\}$. We have that all $t_n>0$, by the continuity of the solution maps $u_0\to u(t, \cdot)$ as mappings from $H^s$ into itself. Introduce $u_n(t, \cdot)=v_n(t, \cdot) + i w_n(t, \cdot)$. 

We are now ready to introduce the modulation parameter $\theta_n(t)$ as long as  $\|u_n(t)-  \phi\|_{H^s}<<1$. 
Indeed, taking initially $t\in (0, t_n)$ guarantees that $\|w_n(t)\|_{H^s} \leq \|u_n(t)-\phi\|_{H^s}<\eps$. As  a consequence, $\theta_n(t)$ is defined so that $w_n(t, \cdot)- \sin(\theta_n(t)) \phi \perp \phi$ or equivalently 
\begin{equation}
\label{430} 
\sin(\theta_n(t)) \|\phi\|^2=\dpr{w_n(t)}{\phi}.
\end{equation}
This last equation explicitly defines an unique small solution $\theta_n(t)$ of \eqref{430},  since 
$|\dpr{w_n(t)}{\phi}|\leq \eps \|\phi\|_{L^2}$. With this assignment, and as long as it holds that 
$ \|u_n(t)-\phi\|_{H^s}<\eps$, we have the estimate 
\begin{equation}
\label{450}
\|u_n(t, \cdot)-e^{i \theta_n(t)} \phi\|_{H^s}\leq \|u_n(t, \cdot) -  \phi\|_{H^s}+
|e^{i \theta_n(t)}-1|\|\phi\|_{H^s}\leq C_0\eps,
\end{equation}
where $C_0=C_0(\|\phi\|)$. Define 
$$
T_n=\sup\{\tau: \sup_{0<t<\tau} \|u_n(t, \cdot)-e^{i \theta_n(t)} \phi(\cdot)\|_{H^s}<2C_0 \eps\}. 
$$
Due to \eqref{450}, we have that $T_n>t_n>0$. Note that the construction above holds for all small enough values of $\eps>0$. We will show that for all small enough values of $\eps$ and for all large enough $n$, $T_n=\infty$. This would be in contradiction with \eqref{410}, provided one chooses $\eps<<\eps_0$ and large enough $n$ and the orbital stability will be established accordingly. 

Write for $t\in (0, T_n)$  
$$
\psi_n(t, \cdot):= u_n(t, \cdot) - e^{i \theta_n(t)} \phi=v_n(t, \cdot) - \cos(\theta_n(t)) \phi+ 
i(w_n(t, \cdot) - \sin(\theta_n(t)) \phi).
$$
Note that while $0<t<T_n$, $\|\psi_n(t)\|_{H^s}<2\eps$, according to the definition of $T_n$. Decompose the real and the imaginary part of $w_n$ as follows 
\begin{equation}
\label{460} 
\left(\begin{array}{c} 
v_n(t, \cdot) - \cos(\theta_n(t)) \phi \\
w_n(t, \cdot) - \sin(\theta_n(t)) \phi
\end{array}\right)=\mu_n(t) \left(\begin{array}{c} 
\phi \\ 0
\end{array}\right)+ \left(\begin{array}{c} 
\eta_n(t, \cdot) \\ \zeta_n(t, \cdot)
\end{array}\right), \ \ \left(\begin{array}{c} 
\eta_n(t, \cdot) \\ \zeta_n(t, \cdot)
\end{array}\right)\perp \left(\begin{array}{c} 
\phi \\ 0
\end{array}\right). 
\end{equation}
Note that the condition $\left(\begin{array}{c} 
\eta_n(t, \cdot) \\ \zeta_n(t, \cdot)
\end{array}\right)\perp \left(\begin{array}{c} 
\phi \\ 0
\end{array}\right)$ simply means $\eta_n(t, \cdot)\perp \phi$, while the defining  equation \eqref{430} came from $w_n(t, \cdot)- \sin(\theta_n(t)) \phi \perp \phi$ or equivalently $\zeta_n(t, \cdot)\perp \phi$. 
On the other hand,  
\begin{eqnarray*}
P[u_n(t)] &=& \int_{\rn} |e^{i \theta_n(t)} \phi+\psi_n(t)|^2 dx=P[\phi]+\|\psi_n(t, \cdot)\|_{L^2}^2+ 2\int_{\rn} \phi(x) \Re  [e^{i \theta_n(t)} \psi_n(t, x)] dx
\end{eqnarray*}
But 
\begin{eqnarray*}
  \int \phi(x) \Re  [e^{i \theta_n(t)} \psi_n(t, x)] dx &=& 	\int \phi(x) [\cos(\theta_n)(v_n - \cos(\theta_n) \phi)  -\sin(\theta_n) (w_n - \sin(\theta_n) \phi)] dx = \\
&=&  \mu_n(t) \cos(\theta_n(t)) \|\phi\|^2,
\end{eqnarray*}
due to $\eta_n\perp \phi$ and $w_n - \sin(\theta_n) \phi\perp \phi$. 
It follows that, 
$$
P[u_n(t)] = P[\phi]+\|\psi_n(t, \cdot)\|_{L^2}^2+ 2  \mu_n(t) \cos(\theta_n(t)) \|\phi\|^2,
$$
whence by recalling that $\|\psi_n(t, \cdot)\|_{L^2}\leq 2\eps$, in $t: 0<t<T_n$ 
\begin{equation}
\label{470} 
|\mu_n(t)|\leq \f{|P[u_n(t)]-P[\phi]|+ \|\psi_n(t, \cdot)\|_{L^2}^2}{2\cos(\theta_n(t) \|\phi\|^2}\leq C(\eps_n+\|\psi_n(t, \cdot)\|_{L^2}^2)\leq C(\eps_n+\eps^2), 
\end{equation}
  since $|\theta_n(t)|\leq C_0\eps<<1$ and hence $\cos(\theta_n(t))=1+O(\eps^2)$. 
Next, 
\begin{eqnarray*}
& & \ce[u_n(t)]-\ce[\phi] = \ce[(\cos(\theta_n(t))\phi+\mu_n(t)\phi+\eta_n)+i (\sin(\theta_n(t)\phi+\zeta_n)]-\ce[\phi]= \\
&=&  \f{1}{2}[\dpr{\cl_+ \eta_n}{\eta_n}+\dpr{\cl_- \zeta_n}{\zeta_n}]+O(\eps_n+ \|\eta_n\|_{H^s}^3+\|\zeta_n\|_{H^s}^3+ \eps^3), 
\end{eqnarray*}
where we took into account $\ce'[\phi]=0$, as well as \eqref{470}. 

We now need the important observation that  according tom Proposition \ref{prop:37} for $\cl_-$ , and then  the non-degeneracy for $\cl_+$, 
we have that there exists $\ka>0$, so that for every $\eta\perp \phi, \zeta \perp \phi$
\begin{equation}
\label{400} 
\dpr{\cl_+ \eta}{\eta}\geq \ka \|\eta\|_{H^s}^2, \dpr{\cl_- \zeta}{\zeta}\geq \ka \|\zeta\|_{H^s}^2, 
\end{equation}
The non-coercivity property \eqref{400} allows us to estimate 
$$
\ce[u_n(t)]-\ce[\phi] \geq \ka(\|\eta_n\|_{H^s}^2+\|\zeta_n\|_{H^s}^2) - C (\eps_n+ \|\eta_n\|_{H^s}^3+\|\zeta_n\|_{H^s}^3+ \eps^3). 
$$
Taking into account that $\eps_n\geq |\ce[u_n(t)]-\ce[\phi] |$, we finally arrive at 
\begin{equation}
\label{480} 
\|\eta_n(t, \cdot)\|_{H^s}^2+\|\zeta_n(t, \cdot)\|_{H^s}^2\leq C(\eps_n+ \eps^3+ 
\|\eta_n(t, \cdot)\|_{H^s}^3+\|\zeta_n(t, \cdot)\|_{H^s}^3),
\end{equation}
for every $t\in (0,T_n)$.  Since  for each $t\in (0,T_n)$, 
$\|\eta_n(t, \cdot)\|_{H^s}+\|\zeta_n(t, \cdot)\|_{H^s}<2 C_0 \eps$, we have that from \eqref{480} and for small enough $\eps$, 
$$
C(\|\eta_n(t, \cdot)\|_{H^s}^3+\|\zeta_n(t, \cdot)\|_{H^s}^3)\leq \f{1}{2}(\|\eta_n(t, \cdot)\|_{H^s}^2+\|\zeta_n(t, \cdot)\|_{H^s}^2),
$$
 whence again by \eqref{480}, we can bootstrap it to 
\begin{equation}
\label{490} 
 \|\eta_n(t, \cdot)\|_{H^s}^2+\|\zeta_n(t, \cdot)\|_{H^s}^2\leq C(\eps_n+ \eps^3), \ \ t\in (0,T_n). 
\end{equation}
 This last estimate shows that for small enough $\eps$ and then  large enough $n$ (recall $\lim_n \eps_n=0$), it must be that $T_n=\infty$, by its definition, since $\eps^{3/2}+\sqrt{\eps_n}<<\eps$. 
 This concludes the proof of the orbital stability.

\appendix

\section{A posteriori smoothness and decay: Proof of Proposition \ref{prop:apost}}  
\label{sec:app1} 
 
	We start with the {\it a priori} information from Proposition \ref{theo:105}, that is $\phi$ is bell-shaped and in the class $\phi\in H^s\cap L^2(V(x) dx)$, together with the fact that $\phi$ is a weak solution of \eqref{110}. 
	
	In order to obtain bootstrap this information, we need a representation of $\phi$ from the Euler-Lagrange PDE. Unfortunately, $\phi$ is still only a weak solution of \eqref{110}, as we have pointed out. Instead, define for large enough $N$, 
	$$
	\tilde{\phi}:= ( (-\De)^s+V +\om_\la+N)^{-1}[\phi^p+N\phi].
	$$
	Heuristically, this is the solution of the \eqref{110}, if $\phi$ were a solution in a stronger sense. 
	In fact, it is not even immediately clear in what sense is $\tilde{\phi}$   even defined. Clearly, while 
	$$
	\|( (-\De)^s+V +\om_\la+N)^{-1}[\phi]\|_{L^2}\leq C\|\phi\|_{L^2}
	$$
	 is under control, it is not as easy to control  $( (-\De)^s+V +\om_\la+N)^{-1}[\phi^p]$, since the {\it a priori} information on $\phi^p$ is very weak. Instead, for $n\leq 4s$, we can bound by \eqref{eq:g10} and Sobolev embedding 
	$$
	\| ( (-\De)^s+V +\om_\la+N)^{-1}[\phi^p]\|_{L^2(\rn)}\leq C \|\phi^p\|_{H^{-2s}}\leq C \|\phi^p\|_{L^{\f{p+1}{p}}} = 
	C \|\phi\|_{L^{p+1}}^p.
	$$
	while for $n>4s$, 
	we bound by  \eqref{eq:g10} and by repeated application of Sobolev  embedding 
	$$
	\| ( (-\De)^s+V +\om_\la+N)^{-1}[\phi^p]\|_{L^2(\rn)}\leq C \|\phi^p\|_{H^{-2s}}\leq C  \|\phi^p\|_{L^{2}} \leq C \|\phi\|_{H^s(\rn)}^p.
	$$
	So, $\tilde{\phi}$ is well-defined as an $L^2(\rn)$ function. 
	Consider a test function $h\in H^{2s}\cap L^2(V^2(x) dx)$, 
	\begin{eqnarray*}
		\dpr{\tilde{\phi}}{( (-\De)^s+V +\om_\la+N) h} = \dpr{\phi^p+N\phi}{h}=\dpr{\phi}{(-\De)^s+V +\om_\la+N) h}.
	\end{eqnarray*}	
	It follows that $\dpr{\phi-\tilde{\phi}}{(-\De)^s+V +\om_\la+N) h}=0$. Since the set 
	$\{ (-\De)^s+V +\om_\la+N) h: h\in  H^{2s}\cap L^2(V^2(x) dx)\}$ is dense in $L^2$, we have that $\phi=\tilde{\phi}$ or 
	\begin{equation}
	\label{310} 
	\phi= ( (-\De)^s+V +\om_\la+N)^{-1}[\phi^p+N\phi].
	\end{equation}
We now run a bootstrapping procedure, which will ultimately establish that $\phi\in H^{2s}(\rd)$.  Starting with $\al_0=s$, we define $\al_{k+1}$, as long as $\al_k<2s$. We have for $\al: \al_k<\al\leq 2s$,  by Sobolev embedding, \eqref{eq:g15} and Kato-Ponce estimates 
\begin{eqnarray*}
\|\phi\|_{H^{\al}}&\leq & C[\|\phi\|_{L^2}+\|\phi^p\|_{H^{\al-2s}}]\leq  
C[\|\phi\|_{L^2}+\|\phi\|_{H^{\al_k}} \|\phi^{p-1}\|_{L^{\f{n}{2s+\al_k-\al}}}]=\\
&=& 
C[\|\phi\|_{L^2}+\|\phi\|_{H^{\al_k}} \|\phi\|_{L^{\f{n(p-1)}{2s+\al_k-\al}}}^{p-1}].
\end{eqnarray*}
In the last term, if we make sure that $\f{n(p-1)}{2s+\al_k-\al}\leq p+1$, we will have control of the right-hand side. Given the restriction $p<1+\f{4s}{n}$, this would be satisfied, if 
$$
\al-\al_k\leq \f{4s^2}{n+2s}.
$$
So, we define $\al_{k+1}:=\min(2s, \al_k+\f{4s^2}{n+2s})$, whence we conclude that $\phi\in H^{\al_{k}}$ for each $k$. Clearly, in finitely many iterations, we will reach $\phi\in H^{2s}(\rd)$.  

Furthermore, 	$\phi^p\in L^2$, since 
	$$
	\|\phi^p\|_{L^2}=\|\phi\|_{L^{2p}}^p\leq C \|\phi\|_{H^{2s}}^p,
	$$
	since $p<1+\f{4s}{n}$. It follows from \eqref{eq:g15} that $\phi\in H^{2s}\cap L^2(V^2(x) dx)$ since 
	$$
	\|\phi\|_{H^{2s}\cap L^2(V^2(x) dx)}\leq C [\|\phi\|_{L^2}+ 	\|\phi^p\|_{L^2}]<\infty.
	$$
	Once we have that $V \phi \in L^2$, it is easy to bootstrap even further. Indeed, we will have that the expression $( (-\De)^s+\om+N)^{-1}[(V+N) \phi]$ makes sense as $L^2$ function, which is positive everywhere, for $N$ large enough, as convolution of $G_{\om+N}>0$ and $(V+N) \phi>0$. Hence, we  have 
	$$
	0<\phi=( (-\De)^s+\om+N)^{-1}[\phi^p+2 N \phi- (V+N)\phi]\leq ( (-\De)^s+\om+N)^{-1}[\phi^p+2 N \phi]
	$$
	This last inequality can be now iterated to $\phi\in L^\infty(\rn)$, see p. 1723, \cite{FLS}.  
	
	We now aim at extending this further to Lipschitz continuity. To this end, introduce a smooth and even cut-off function $\chi: supp\chi\subset (-2,2)$, so that $\chi(x)=1, |x|<1$. Let $N>>1$ and $\chi_N(x):=\chi(x/N)$. Multiplying the equation \eqref{a:20} by the cutoff $\chi_N$ and $\phi_N:=\phi(x) \chi_N$, we can rewrite it in the form 
	\begin{equation}
	\label{pol}
		((-\De)^s +\om +M)\phi_N= - V \phi_N+\phi^{p}\chi_N+M \phi_N+[(-\De)^s, \chi_N]\phi.
	\end{equation}
	for any $M$. The operator on the left-hand side is invertible for large enough $M$, and we can write 
		\begin{equation}
		\label{pol1}
	\phi_N=((-\De)^s +\om +M)^{-1}[- V \phi_N+\phi^{p}\chi_N+M \phi_N+[(-\De)^s,\chi_N]\phi].
	\end{equation}
	According to the Mikhlin multplier's theorem, $((-\De)^s +\om +M)^{-1}$ smooths out by $2s$ derivatives in any Sobolev space $W^{\al, p}, 1<p<\infty$. It follows that for any $\al<2s$, 
	$$
	\|\phi_N\|_{W^{\al, p}}\leq C_{\al, p}[ \|V \phi_N\|_{L^p} +\|\phi^{p}\chi_N\|_{L^p} 
	+M \|\phi_N\|_{L^p}+\|[(-\De)^s,\chi_N]\phi\|_{L^p}\leq C_{\al, p},
	$$
	due to the {\it a priori} bounds on $\|\phi\|_{L^p}$, 
	 and the fact that $V$ is bounded on the support of $\chi_N$. 
	Note that we also have used a corollary of the commutator estimates to derive 
 $\|[(-\De)^s,\chi_N] \phi\|_{L^p}\leq C_{N,p, \tilde{p}} \|\phi\|_{L^{\tilde{p}}}, \tilde{p}>p$. It follows that $\phi_N\in W^{2s, p}, p<\infty$ for each $N$. If $2s>1$, there is nothing to do, as   $\phi_N\in W^{1+, p}, p<\infty$, which by Sobolev embedding will imply that $\phi\in C^1$ as required. 
 
 Otherwise,  apply $(-\De)^s$ to \eqref{pol} and then use the inversion formulas as
in \eqref{pol1}. Since $\phi_N\in W^{2s, p}$, we see that (recall that $V\in C^1(\rn)$) 
$$
(-\De)^s [- V \phi_N+\phi^{p}\chi_N+M \phi_N+[(-\De)^s,\chi_N]\phi] \in L^p,
$$
whence $\phi_N\in W^{4s, p}$ and so on. This can be bootstrapped, in finitely many steps to the desired outcome $\phi_N\in W^{1+, p}, p<\infty$, so $\phi\in C^1$. We omit further details.

\end{document}